\documentclass[11pt]{amsart}
\usepackage{latexsym,enumerate}
\usepackage{amsmath,amsthm,latexsym,amssymb,mathrsfs,xcolor}
\usepackage{verbatim}
\usepackage{graphicx}
\usepackage{cite}
\usepackage{comment}

\usepackage{color}

\makeatletter
\def\th@definition{%
  \normalfont 
}
\def\th@plain{%
  \slshape
}
\def\th@remark{%
  \normalfont 
  \thm@preskip\topsep
  \divide\thm@preskip\tw@
  \thm@postskip\thm@preskip
}
\makeatother

\numberwithin{equation}{section}
\theoremstyle{plain}

\newtheorem{theorem}{Theorem}
\newtheorem{lemma}[theorem]{Lemma}
\newtheorem{corollary}[theorem]{Corollary}

\theoremstyle{remark}
\newtheorem{remark}[theorem]{Remark}
\theoremstyle{definition}
\newtheorem{definition}[theorem]{Definition}

\numberwithin{theorem}{section}

\def\R{\mathbb R}
\def\rn{{\mathbb R}^N}

\def\N{\mathbb N}

\def\m2{|\Omega | /2}

\def\be{\begin{equation}}
\def\ee{\end{equation}}

\usepackage{hyperref}
\begin{document}

\title[Weighted Dirichlet-type inequalities]{Weighted Dirichlet-type inequalities for the\\ decreasing rearrangement in cylinders}
\author[F. Brock]{F. Brock$^1$}
\author[F. Chiacchio]{F. Chiacchio$^2$}
\author[A. Ferone]{A. Ferone $^3$}
\author[A. Mercaldo]{A. Mercaldo$^2$}

\normalsize \small
\renewcommand{\baselinestretch}{1.1}
\normalsize
\date{\today}

\setcounter{footnote}{1}
\footnotetext{
Martin-Luther-University of Halle, Landesstudienkolleg, 06114 Halle, Paracelsusstr. 22, Germany,  
 e-mail: {\tt friedemann.brock@studienkolleg.uni-halle.de}
}
\setcounter{footnote}{2}
\footnotetext{Universit\`a di Napoli Federico II, Dipartimento di Matematica e Applicazioni ``R. Caccioppoli'',
Complesso Monte S. Angelo, via Cintia, 80126 Napoli, Italy;\\
e-mail: {\tt fchiacch@unina.it,  mercaldo@unina.it}}

\setcounter{footnote}{3}
\footnotetext
{Universit\`a degli studi della Campania ``L. Vanvitelli'', Dipartimento di Matematica e Fisica, viale Lincoln 5, 81100   Caserta, Italy,
e-mail: {\tt adele.ferone@unicampania.it}
}

\date{March 2026}

 \begin{abstract}
In this paper weighted Dirichlet-type inequalities for the decreasing rearrangement in cylinders are proved. A weighted isoperimetric inequality is also obtained.

\noindent{\sc Key words: }  Decreasing rearrangement, weighted isoperimetric inequality, Dirichlet-type inequality with weights.

\noindent{\sc Mathematics Subject Classification, MSC 2000 : }  26D10, 51M16, 35J20, 35B99
\end{abstract}
 
\maketitle

\section{Introduction}
\setcounter{section}{1}

Rearrangement methods are a classical and powerful tool in the study of variational problems, partial differential equations, and geometric-functional inequalities.  A fundamental example is provided by the so-called Dirichlet-type functional, namely an integral functional of the form
\begin{equation}
\label{generaleq}
J(u) := \int_{\mathbb{R}^N} G(x,u,\nabla u),dx,
\end{equation}
where $u$ is a measurable function with a generalized  first derivatives. Let $u^*$ denote a rearrangement of $u$, such as the Schwarz or Steiner symmetrization. It is well known that, under suitable assumptions, inequalities of the form
\begin{equation}
\label{generaleq2}
J (u^*) \leq J (u),	
\end{equation}
hold. A prototypical example occurs when $J(u)$ is the Sobolev norm of a  function in $W^{1,p}(\mathbb{R}^N)$. In this case, \eqref{generaleq2} reduces to the classical Pólya–Szegő principle, which states that 
Dirichlet integrals  decrease under symmetrization, namely
$$
\int_{\rn} |\nabla u^* |^p\, dx \le \int_{\rn} |\nabla u|^p\,dx,
$$
(see, e.g., \cite{ALT,Br,BanNazarov,Talenti1,Talenti2,ADLT,K1,T}).

Inequalities of this type are closely related to isoperimetric principles and play an important role in the qualitative analysis of variational problems and boundary value problems. In particular, they are often used to derive symmetry properties of minimizers, comparison results, and a priori estimates for solutions.

However, in several situations of current interest,  classical symmetrizations are not the most appropriate tools. This occurs, for example,
when the geometry of the problem favors a fixed direction, or when the functional involves weights or anisotropies. These considerations naturally lead to the search for   inequalities  \eqref{generaleq2} which involve suitable   weighted rearrangements. For example rearrangement inequalities  \eqref{generaleq2} have been extensively investigated  for  Gaussian measure (see, e.g., \cite{Ehrhard, TalentiFerrara}) or more in general  for measures  $d\mu=\varphi (x) dx$ with density of the   form
\begin{equation}
\label{1.4}
\varphi(x) = \psi(x_1)\rho(x_2,\ldots,x_n),
\end{equation}
where $\psi$ and $\rho$ are positive continuous functions (see in \cite{BBMP2}). In that paper a class of weighted “Steiner-type” rearrangements is introduced 
and it is proved that functionals of the form \eqref{generaleq}, defined on sufficiently regular functions, decrease under these weighted rearrangements, provided suitable growth conditions on the integrand are satisfied,
(see also \cite{Ehrhard,TalentiFerrara,BBMP1, Ka1, Br, CiFu, ET}).

Motivated by these developments, in the present paper we investigate weighted Dirichlet-type inequalities associated with the \emph{decreasing rearrangement in one direction}. This type of rearrangement is particularly natural in cylindrical domains and for integral functionals in which one variable plays a distinguished role.

We begin by dealing with the one-dimensional case. If $u^*$ denotes the decreasing rearrangement of a nonnegative function $u$, we prove inequalities of the form
\begin{equation}\label{onedimensional}
\int_I G\bigl(f|u'|,u\bigr)\,dx
\geq
\int_I G\bigl(f|(u^*)'|,u^*\bigr)\,dx,
\end{equation}
for a broad class of continuous integrands $G$ which are convex and nondecreasing in the gradient variable, for any sufficiently regular function $u$ on the intervall  $I=(0,\alpha)$, with $\alpha\in(0,+\infty]$ (see Theorem \ref{Polyaszego1}). We also obtain a more general Landes-type inequality, extending the corresponding result in \cite{landes} (see Theorem \ref{landesgeneral}).

We then turn to the multidimensional case, by considering functions $u$ defined on  cylinders
$
\Omega=\Omega'\times I,
$
where $\Omega'\subset\mathbb{R}^{N-1}$ is a bounded domain with Lipschitz boundary. For these  functions $u=u(x',y)$,  we establish the $N$-dimensional analogue of inequality \eqref{onedimensional}, namely
\begin{equation}
\label{intro:mainineq}
\int_{\Omega} G\bigl(\nabla' u,f|u_y|,u,x'\bigr)\,dx
\geq
\int_{\Omega} G\bigl(\nabla' u^*,f|u_y^*|,u^*,x'\bigr)\,dx,
\end{equation}
where $u^*$ denotes  the decreasing rearrangement with respect to the variable $y$, obtained by rearranging each one-dimensional section $u(x',\cdot)$ with $x'$ fixed. 
We first establish this inequality for a suitable class of piecewise affine functions and then extend it, by approximation, to Lipschitz and Sobolev functions. As a model, we obtain inequalities of the form
\[
\int_{\Omega}\left(|\nabla' u|^2+f(x',y)^2|u_y|^2\right)^{p/2}\,dx
\geq
\int_{\Omega}\left(|\nabla' u^*|^2+f(x',y)^2|u_y^*|^2\right)^{p/2}\,dx.
\]
This yields corresponding inequalities in weighted Sobolev and BV spaces (see Theorems \ref{thnew1} and \ref{BV}),as well as a weighted isoperimetric inequality (see Corollary \ref{cor8}).

Besides their intrinsic interest, inequalities of the form considered here are naturally connected with several concrete analytical and applied problems. A first source of motivation comes from {\sl generalized axially symmetric potential theory}, where non-uniformly elliptic equations in the half-plane arise as reductions of higher-dimensional problems under axial symmetry \cite{WeinsteinGASPT}. Related weighted structures also appear in fluid mechanics, in particular in axially symmetric flow problems, in the study of virtual mass and polarization of bodies of revolution, and in torsion problems for shafts of revolution; see, for instance, \cite{WeinsteinGASPT,WeinsteinFlows,PayneGenElectro,SchifferSzego,WeinsteinTorsion}.

The paper is organized as follows. In Section \ref{sec2} we study the one-dimensional case, proving the weighted isoperimetric inequality for the decreasing rearrangement and deriving the corresponding Dirichlet-type inequalities for functions. In Section \ref{sec3} we establish the multidimensional P\'olya--Szeg\"o principle in cylinders, first for regular functions and then for Sobolev functions. Finally, in Section \ref{sec4} we reformulate the previous results in terms of weighted rearrangements and derive applications to weighted Sobolev and BV spaces.

\section{Weighted  inequalities for the decreasing rearrangement}
\label{sec2}
\setcounter{section}{2}
\setcounter{equation}{0}

Let us begin by introducing some notation. 
\begin{eqnarray*}
&&
\R _+ = (0, +\infty ) , \ \ \overline{\R_+} = [0, +\infty ),
\\
&&
\R ^N \ni x=  (x',y) \ \mbox{ where }\ x' = (x_1 , \ldots , x_{N-1} ) \ \mbox{ and }\ y= x_N ,
\\
&&
\R_+ ^N = \{ x=(x',y ):\, x'\in \R ^{N-1} , \, y>0\} , \quad 
\overline{\R_+ ^N } = \{ x=(x',y ):\, x'\in \R ^{N-1} , \, y\geq 0\},
\\
&&
\mathcal{H}_k \ :  \mbox{  $k$-dimensional Hausdorff measure, $k=0,1,\ldots N$ }.
\end{eqnarray*} 
Throughout this paper 
let $\alpha \in (0, +\infty ] $ and $I$ the interval $(0, \alpha )$.
In this section we assume that 
\begin{eqnarray}
\label{f1}
&&
f\in C(\overline{I}),
\ \ f(x) >0\quad \forall x\in I ,
\\
\label{basiccond} 
&&
f(x_2 -x_1 ) \leq f(x_1 )+f(x_2 ) \ \mbox{ if
$ x_1 , x_2 \in \overline{I}$  with $x_1 \leq  x_2 $, and}
\\
\label{basiccond2} 
&&
\mbox{if $\alpha <+\infty$, then also $
f(x) = f(\alpha -x) \quad \forall x\in \overline{I}$.}
\end{eqnarray}

\begin{lemma}
\label{basiclemma}
Let $x_k \in \overline{I}$, ($k=1, \ldots ,n$), with $x_1 \leq \ldots \leq x_{n} $. Then
\begin{equation}
\label{basicineq}
f\left(
\sum_{k=1} ^{n} x_k (-1)^{n-k} \right) \leq \sum_{k=1} ^{n} f(x_k ) \ .
\end{equation}
\end{lemma}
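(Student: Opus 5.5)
Let $S_n := \sum_{k=1}^{n} x_k(-1)^{n-k} = x_n - x_{n-1} + x_{n-2} - \cdots$. I would prove \eqref{basicineq} by induction on $n$, using only the subadditivity-type condition \eqref{basiccond} applied to the pair $S_{n-1}\le x_n$.

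First, the alternating sum stays in the interval. For sorted $0\le x_1\le\dots\le x_n$, group $S_n$ in pairs from the top: $(x_n-x_{n-1})+(x_{n-2}-x_{n-3})+\cdots$, with a final lone $x_1$ if $n$ is odd. Every term is nonnegative, so $S_n\ge 0$. Grouping instead as $x_n-(x_{n-1}-x_{n-2})-\cdots$ gives $S_n\le x_n$. Hence $0\le S_n\le x_n$, so $S_n\in\overline{I}$ and $f(S_n)$ is defined. The same applies to $S_{n-1}$ built from $x_1,\dots,x_{n-1}$, so $0\le S_{n-1}\le x_{n-1}\le x_n$.

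Now the induction. For $n=1$ the claim is $f(x_1)\le f(x_1)$, which is trivial. For $n\ge2$, the recursion $S_n = x_n - S_{n-1}$ holds because the signs flip when the index shift is absorbed. Since $S_{n-1}$ and $x_n$ lie in $\overline I$ with $S_{n-1}\le x_n$, condition \eqref{basiccond} gives
\[
f(S_n)=f(x_n-S_{n-1})\le f(S_{n-1})+f(x_n).
\]
The induction hypothesis gives $f(S_{n-1})\le\sum_{k=1}^{n-1}f(x_k)$, and adding the two inequalities yields \eqref{basicineq}.

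The only delicate point is checking the ordering hypothesis $S_{n-1}\le x_n$ needed by \eqref{basiccond}; this is exactly the bound $S_{n-1}\le x_{n-1}$ from the grouping argument. Condition \eqref{basiccond2} and the positivity of $f$ are not needed.
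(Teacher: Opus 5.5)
Your proof is correct and is essentially the paper's argument: induction on $n$, writing the alternating sum as $x_n - S_{n-1}$ and applying \eqref{basiccond} to the pair $S_{n-1}\le x_n$ together with the induction hypothesis. Your explicit check that $0\le S_{n-1}\le x_{n-1}\le x_n$, which is what makes \eqref{basiccond} applicable, fills in a step the paper leaves implicit.
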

\begin{proof} The proof of (\ref{basicineq}) is by induction over $n$.
\\
The case $n=1$ is (\ref{basiccond}).
\\
Next assume that (\ref{basicineq}) holds for some $n\in\mathbb{N}$, and let
$x_k \in \overline{I}$, $k= 1, \ldots ,n+1,$ with $ x_1 \leq \ldots \leq x_{n} \leq x_{n+1} $. Then we obtain, using (\ref{basiccond}) and the induction assumption,
$$
f\left( 
\sum_{k=1} ^{n+1} x_k (-1) ^{n+1-k} 
\right)
 \leq  f(x_{n+1} ) + f \left( \sum_{k=1} ^n x_k (-1) ^{n-k} \right)
  \leq  \sum_{k=1} ^{n+1} f(x_k )\ , 
$$
proving (\ref{basicineq}) for $n+1$ in place of $n$.
\end{proof}
If $M\subset \overline{I}$ is Lebesgue measurable with finite measure, then let $|M|$ denote its measure, and 
\begin{equation}
\label{decreasingrearr}
M^*:= (0, |M|) 
\end{equation}
the {\sl decreasing rearrangement of $M$}. 
\\
Following \cite{Camfield}, we now define a perimeter related to the weight function $f$. 
\begin{definition}
Let $M$ be a measurable subset of $I$.
The {\sl $f$-perimeter of $M  $ relative to $I$} is defined by 
\begin{equation}
\label{Pfdef1}
P_f (M, I ) := 
\sup \left\{ \int_{M} \varphi ' (x)\, dx : \ \varphi \in C^1 _0 (I ), \ |\varphi  | \leq f \right\} .
\end{equation}
\end{definition}
It is well-known that the above
definition of the weighted perimeter is equivalent to the following 
\begin{equation}
\label{Pfdef2}
P_{f}(M, I )=
\left\{ 
\begin{array}{ll}
\displaystyle{
\int_{\partial M \cap I  }f (x)\, d\mathcal{H}_0} 
& \mbox{ if } \   \partial M \cap  I  \mbox{ is rectifiable } \\ 
+ \infty   & \mbox{ otherwise }  
\end{array}
\right. 
.
\end{equation}
\begin{theorem}
\label{isopR}
Let $M \subset I $ be measurable with finite measure. 
Then the following isoperimetric inequality holds:
\begin{equation}
\label{isopform}
P_f (M^*, I ) \leq P_f (M, I).
\end{equation}
\end{theorem}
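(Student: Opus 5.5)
We may assume $P_f(M,I)<+\infty$; otherwise there is nothing to prove. By (\ref{Pfdef2}), $\partial M\cap I$ is then rectifiable. In dimension zero this means the relative boundary is a finite set, so up to a null set $M$ is a finite union of disjoint open intervals with endpoints in $\overline I$. Since $P_f$ defined by (\ref{Pfdef1}) is unchanged when $M$ is modified on a null set, we may write
\[
M=\bigcup_{j=1}^m (a_j,b_j),\qquad 0\le a_1<b_1<a_2<\cdots<a_m<b_m\le \alpha ,
\]
with the intervals pairwise separated; if two of them touched, the common endpoint would be removable and we merge them. Then $|M|=\sum_j (b_j-a_j)$, and $P_f(M,I)$ is the sum of $f$ over those points among $a_1,b_1,\dots,a_m,b_m$ that lie in the open interval $I$. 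Also $M^*=(0,|M|)$, so $P_f(M^*,I)=f(|M|)$ when $0<|M|<\alpha$. If $|M|=0$ or $|M|=\alpha$, then $\partial M^*\cap I=\emptyset$ and the left side of (\ref{isopform}) vanishes. So it remains to show
\[
f(|M|)\le \sum_{x\in\{a_j,b_j\}\cap I} f(x).
\]

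The key observation is the alternating sum identity
\[
|M|=\sum_{j=1}^m (b_j-a_j)=b_m-a_m+b_{m-1}-\cdots+b_1-a_1 .
\]
Listing the endpoints increasingly as $x_1=a_1\le x_2=b_1\le\cdots\le x_{2m}=b_m$, this is exactly $\sum_{k=1}^{2m}x_k(-1)^{2m-k}$. Lemma \ref{basiclemma} then gives
\[
f(|M|)\le \sum_{k=1}^{2m} f(x_k)=\sum_j\bigl(f(a_j)+f(b_j)\bigr).
\]

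The main obstacle is that the endpoints $a_1=0$ and/or $b_m=\alpha$ may lie outside $I$, and they do not contribute to $P_f(M,I)$. We handle each boundary case separately.
\begin{itemize}
\item If $a_1=0$: the term $x_1=0$ can simply be dropped from the alternating sum. We then apply Lemma \ref{basiclemma} to the remaining $2m-1$ points; their alternating sum is still $|M|$, since the sign pattern in the lemma is anchored at the last index.
\item If $b_m=\alpha<\infty$: we use the symmetry (\ref{basiccond2}), $f(|M|)=f(\alpha-|M|)$. Note that $\alpha-|M|=|I\setminus M|$, the measure of the complement.
 \begin{itemize}
 \item If also $a_1>0$, the complement is a union of intervals with no endpoint at $\alpha$ and with interior endpoints the same as those of $M$. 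It therefore falls under the first two cases (after possibly dropping $0$).
 \item If both $a_1=0$ and $b_m=\alpha$, the complement has no endpoint at $0$ or $\alpha$. Applying the previous argument to $I\setminus M$ gives $f(\alpha-|M|)\le\sum f$ over the interior endpoints.
 \end{itemize}
\end{itemize}
When $\alpha=+\infty$ the endpoint $b_m$ is always finite, since $|M|<\infty$, so only the case $a_1=0$ occurs. With this case analysis in place, (\ref{isopform}) follows in every case.
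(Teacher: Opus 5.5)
Your proof is correct and is essentially the paper's argument. The paper also reduces to a finite union of intervals and treats the same four cases according to whether $0$ and $\alpha$ are endpoints of $M$. It applies Lemma~\ref{basiclemma} either directly or after replacing $f(|M|)$ by $f(\alpha-|M|)$ via (\ref{basiccond2}). Your reading of $\alpha-|M|$ as $|I\setminus M|$ is just a rephrasing of the paper's alternating sum $x_{2n-1}-x_{2n-2}+\cdots$, and your explicit treatment of $|M|\in\{0,\alpha\}$ covers a degenerate case the paper passes over.

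One caveat concerns the reduction step, which the paper merely asserts. Finite $f$-perimeter does not force finitely many components when $f$ is not bounded away from zero near an endpoint of $I$; for example, $f(x)=x$ and $M=\bigcup_{k\ge 1}(2^{-2k-1},2^{-2k})$ give $P_f(M,I)=1/2$. So reading ``rectifiable'' as ``finite'' is not justified. Instead, replace $M$ by $M\cap(\varepsilon,R)$, with $\varepsilon$ and $R$ chosen in gaps of $M$ wherever components accumulate at $0$ or $\alpha$. This leaves finitely many components and does not increase $P_f$. Then let $\varepsilon\to 0$ and $R\to\alpha$, using $|M\cap(\varepsilon,R)|\to|M|$ and the continuity of $f$.
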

\begin{proof}
It is sufficient to prove (\ref{isopform}) for sets $M\subset I$ which are of the form
$$
M= \bigcup_{j=1}^n (x_{2j-1} , x_{2j}), \ \mbox{ where } \  x_1 < x_2 <\ldots < x_{2n-1} < x_{2n} , \quad (n\in \N ).
$$
Then $M^* = \left( 0, \sum_{k=1} ^{2n} x_k (-1) ^k \right)$.
We split into four cases: 
\\
{\sl (i)} Let $x_1 >0$ and $x_{2n} <\alpha $. Then $P_f (M, I ) = \sum_{k=1} ^{2n} f(x_k ) $ and 
$$
P_f (M^* , I ) = 
 f\left( \sum_{k=1} ^{2n} x_k (-1)^k\right),
$$
so that (\ref{isopform}) follows from (\ref{basicineq}).
\\
{\sl (ii)} Let $x_1 >0$ and $x_{2n} =\alpha <\infty $. Then
$P_f (M, I ) = \sum_{k=1} ^{2n-1} f(x_k ) $. By (\ref{basiccond2}) we obtain
$$
P_f (M^* , I ) = 
 f\left( \alpha -x_{2n-1} +- \ldots +x_2 -x_1 \right) 
=  f\left( x_{2n-1} -+ \ldots -x_2 +x_1 \right) ,
$$
and  (\ref{isopform}) follows from (\ref{basicineq}).
\\
{\sl (iii)} Let $x_1 =0$ and $x_{2n} <\alpha $. Then we have $P_f (M, I ) = \sum_{k=2} ^{2n} f(x_k) $ and 
$$
P_f (M^* , I ) = 
 f\left( \sum_{k=2} ^{2n} x_k (-1)^k \right),
$$
so that (\ref{isopform}) follows again from (\ref{basicineq}).
\\
{\sl (iv)} Let $x_1 =0$ and $x_{2n} =\alpha <+\infty $.
Then we have $P_f (M, I ) = \sum_{k=2} ^{2n-1} f(x_k) $ and 
by (\ref{basiccond2}) we obtain
$$
P_f (M^* , I ) = 
 f\left( \alpha -x_{2n-1} +- \ldots +x_2  \right) 
=  f\left( x_{2n-1} -+ \ldots -x_2 \right) ,
$$
and  (\ref{isopform}) follows again from (\ref{basicineq}).
\end{proof}
\noindent
\begin{remark} 
\label{commentsonf} 
{\bf (a)} The conditions (\ref{basiccond}) and (\ref{basiccond2}) are also necessary for (\ref{isopform}) to hold. To see this, consider first $M= (x_1 , x_2 )$ where $0<x_1 <x_2 <\alpha  $. Then $M^* = (0, x_2 -x_1 )$, so that (\ref{isopform}) implies (\ref{basiccond}).
\\
Next let $M=  (x ,\alpha )$, where
$0\leq x \leq \alpha <+\infty $. Then $M^* = (0, \alpha -x )$, so that (\ref{isopform}) implies 
\begin{equation}
\label{<}
f(\alpha -x ) = P_f (M^* ,I) \leq P_f (M,I)= f(x).
\end{equation}
Further, with the same $x$ and $\alpha $,  let $M= (\alpha -x,\alpha )$. Then $M^* = (0,x)$, so that we obtain from (\ref{isopform})
\begin{equation}
\label{>}
f(x) = P_f (M^*, I) \leq P_f (M,I) = f(\alpha -x).
\end{equation}
Now (\ref{<}) and (\ref{>}) imply (\ref{basiccond2}).
\\ 
{\bf (b)} Let $\alpha =+\infty$. Then (\ref{basiccond}) is for instance satisfied, if $f$ is nondecreasing.
\\
A model case is
\begin{equation}
\label{modelcase}
f(x) = x^{\gamma }, \ (\gamma >0).
\end{equation} 
Moreover, (\ref{basiccond}) also holds for some convex functions which are not monotone, such as
$$
f(x) = x^2 -2x + 2, \quad x\geq 0.
$$
On the other hand, convexity is not sufficient for (\ref{basiccond}), as the example
$$
f(x) = x^2 -2x + \frac{3}{2} , \quad x\geq 0,
$$
shows. 
\\
{\bf (c)} Assume 
(\ref{basiccond2}) and that 
\begin{equation}
\label{basiccond3}
f \ \mbox{ is concave.}
\end{equation}
Then an elementary analysis shows that
\begin{equation}
\label{basiccond4}
f(0) + f(x_2 -x_1 )\leq f(x_1 )+ f(x_2 ) \ \mbox{ whenever $0 \leq x_1 \leq x_2 \leq \alpha $,}
\end{equation} 
so that (\ref{basiccond}) follows.

\noindent Indeed let us define
$a := x_{2} - x_{1} \geq 0,$ $b := x_{1}$ (hence  $a + b = x_{2} \leq \alpha$).
For any fixed $ b$, consider the function
\[
h(t) := f(t + b) - f(t) \quad \text{defined for } 0 \leq t \leq \alpha - b.
\]
Since $f$ is concave, it follows that the function $h$  is non-increasing. Thus, for any $a$ such that $0 \leq a \leq \alpha - b$, we have the following
\[
f(a + b) - f(a) = h(a) \geq h(\alpha - b) = f(\alpha) - f(\alpha - b).
\]
From the symmetry assumption (\ref{basiccond2}), we know that
\[
f(\alpha) = f(0) \quad \text{and} \quad f(\alpha - b) = f(b).
\]
Substituting these into the previous inequality, we obtain
\[
f(a + b) - f(a) \geq f(0) - f(b),
\]
 which becomes
\[
f(x_{2}) - f(x_{2} - x_{1}) \geq f(0) - f(x_{1}).
\]
Finally,  we get the claim
 (\ref{basiccond4}) by addding $f(x_2 - x_1)+ f(x_1)$  to both sides of the last inequality.

\vspace{0.5 cm}

\end{remark}
\hspace*{0.3cm}
By $S $ we denote the class of Lebesgue measurable functions $u: I \to \overline{\R_+}$ satisfying    
$$
|\{ x \in I : \, u(x )>c \}|< +\infty
\quad \mbox{ 
for all $c >0$.}
$$ 
Further, let 
$L $ denote the set of functions $u \in S$ 
which are  Lipschitz  
continuous and have compact support in $\overline{I} $.
\\
Next we give the well-known definition of the 
decreasing rearrangement of functions (see, for example,  \cite{K1}).
\begin{definition}
\label{defdecreasing1} 
Let
$ u \in S $. Then the function $u^* $ defined by
$$
u^* (x ) := \sup \{ 
c > 0 :\, x \in \{ u( \cdot ) > c \}^* \}, \quad (x\in I).
$$
is called the {\sl decreasing rearrangement of $u$}.
\end{definition}
\begin{remark}
\label{rem1onedim} 
By definition $u^*$ is nonincreasing. Moreover we have $\{ u( \cdot ) > c \} ^*  = \{ u^*(\cdot  ) > c\} $.
It is also well-known that if $u$ is continuous, then the  sets $\{ u > c\} $, ($c>0$), are open and the  function $u^* $ has a  pointwise representative which is continuous, too. Moreover, if $u\in L$, then we also have $u^* \in L $. 
\end{remark}
In the following we will use a subclass of functions in $L $, called {\sl nice functions}.
\begin{definition}
\label{nicef1} {\sl (Nice functions)}
A function $u $ is called a {\sl nice function} if $u\in L$, if $u$ is piecewise linear 
 and if the equation $u(x ) = c$ has for every $c > 0$ only a finite  number of solutions $x = x_k $, $k = 1,\ldots , m$, with $m\in \mathbb{N}$.
\end{definition}
\begin{remark}
\label{rem3}  
It is well-known that nice functions are dense in $L $ w.r.t. the norm 
$$
\Vert | u|\Vert := \sup  |u| +\sup |\nabla u|  , 
$$
see e.g. \cite{K1}, pp. 49. A detailed explanation for functions depending on $N$ variables will be given in Remark 3.5. 
\end{remark}
\begin{theorem}
\label{Polyaszego1} Let 
$$
G:   \overline{\R_+} \times  
\overline{\R_+} \to\overline{\R_+} , \ \ G= G( z , v ) ,
$$
be continuous in all the arguments, convex and nondecreasing in $z $. 
Finally, let $u\in L $. Then $u^* \in L $ and there holds
\begin{equation}
\label{dirichletonedim1}
\int_{I  } G\left( f|u' |, u \right) \, dx \geq  
 \int_{I } G \left( f |(u^* ) ^{\prime} |, u^* \right) \, dx .
 \end{equation}  
\end{theorem}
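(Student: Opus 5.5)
We first prove \eqref{dirichletonedim1} for nice functions and then pass to general $u\in L$ by density (Remark \ref{rem3}).

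\emph{Nice functions.} Let $u$ be nice. Choose finitely many levels $0<c_0<c_1<\ldots<c_m$ containing all values of $u$ at its breakpoints and at the endpoints of $I$ (if finite), together with $\max u$. On each slab $c_{i-1}<u<c_i$ the level set $\{u>c\}$ is a finite union of intervals. The number of boundary points $x_k(c)$, $k=1,\ldots,n_i$, is constant in $c$, and each $x_k$ is affine in $c$ with $|u'(x_k)|=1/|x_k'(c)|$.

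By the coarea (change of variables) formula on each slab,
\[
\int_{\{c_{i-1}<u<c_i\}} G(f|u'|,u)\,dx=\int_{c_{i-1}}^{c_i}\sum_{k} \frac{G\big(f(x_k)|u'(x_k)|,c\big)}{|u'(x_k)|}\,dc .
\]
For $u^*$ there is a single point $y(c)=|\{u>c\}|$, with $1/|(u^*)'(y)|=\sum_k 1/|u'(x_k)|=:\mu'(c)$. Write $\lambda_k=(1/|u'(x_k)|)/\mu'(c)$, so that $\sum_k\lambda_k=1$.

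\emph{Pointwise inequality.} Fix $c$. Since $G$ is convex in $z$, Jensen's inequality gives
\[
\sum_k \lambda_k G\big(f(x_k)|u'(x_k)|,c\big)\ \ge\ G\Big(\sum_k\lambda_k f(x_k)|u'(x_k)|,c\Big)=G\Big(\frac{\sum_k f(x_k)}{\mu'(c)},c\Big).
\]
Interior boundary points are the only ones that count. Points $x_k$ equal to $0$ or $\alpha$ do not move with $c$, so they carry zero weight and can be discarded, consistent with $P_f$ being relative to $I$.

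Theorem \ref{isopR} gives $f(y(c))=P_f(\{u>c\}^*,I)\le P_f(\{u>c\},I)=\sum_{x_k\in I}f(x_k)$. When $y(c)=\alpha$ there is no boundary point and the term vanishes. Since $G$ is nondecreasing in $z$,
\[
G\Big(\frac{\sum_k f(x_k)}{\mu'(c)},c\Big)\ \ge\ G\Big(f(y)|(u^*)'(y)|,c\Big).
\]
Multiplying by $\mu'(c)$ and integrating in $c$ gives the slab integral for $u^*$ via the same change of variables.

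\emph{Flat parts and zero level.} Where $u'=0$ or $u=0$ we have a positive-measure flat set of $u$. The corresponding set for $u^*$ has exactly the same measure, $|\{u=c\}|=|\{u^*=c\}|$ by equimeasurability, and there both integrands equal $G(0,c)$. Note that $G(0,0)$ contributes over a set of infinite measure if $\alpha=\infty$, and this is identical for both sides. Adding all contributions gives \eqref{dirichletonedim1} for nice $u$.

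\emph{Approximation.} For $u\in L$ take nice $u_n\to u$ in $\Vert|\cdot|\Vert$, with supports in a common compact set, via Remark \ref{rem3}. Then $u_n^*\to u^*$ uniformly because rearrangement is a contraction in sup norm, and $|(u_n^*)'|$ is uniformly bounded. The left side converges by dominated convergence, using continuity of $G$, uniform convergence of $u_n'$, and the bounded common support. On the right, $(u_n^*)'\to(u^*)'$ only weakly-$*$ in $L^\infty$. Convexity and monotonicity of $G$ in $z$, together with continuity in $v$, give weak lower semicontinuity of $\int G(f|w'|,w)$ (Ioffe-type), so $\int G(f|(u^*)'|,u^*)\le\liminf_n\int G(f|(u_n^*)'|,u_n^*)$.

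This lower semicontinuity step needs care. The integrand depends on $|w'|$, which is convex in $w'$ when composed with $G$ nondecreasing and convex. If $\alpha=\infty$, one also has to handle $G(0,0)\neq0$ on an unbounded set; to do so, subtract $G(0,0)$ beforehand or restrict to a common bounded support, where both sides agree outside. I expect this approximation step, not the slab computation, to be the main technical obstacle.
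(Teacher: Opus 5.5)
Your proposal is correct and is essentially the paper's proof: reduce to nice functions, split the range of $u$ into level intervals on which $u=c$ has a fixed number of affine inverse branches, apply Jensen's inequality with weights $|dx_k/dc|/\sum_j|dx_j/dc|$, and conclude with $f(|\{u>c\}|)\le\sum_k f(x_k)$ and the monotonicity of $G$ in $z$. The differences work in your favour: deriving that last inequality from Theorem~\ref{isopR} instead of directly from Lemma~\ref{basiclemma} also covers the case $\alpha<\infty$, $u(\alpha)>c$ (there $|\{u>c\}|=\alpha-\sum_k x_k(-1)^{n-k}$ rather than the paper's $X^j$, so the symmetry (\ref{basiccond2}) is needed), and you spell out the lower-semicontinuity argument for the density step, which the paper leaves implicit (one small fix: take $c_0=0$ so the lowest slab is not lost).
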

\begin{proof} In view of Remark \ref{rem3} we may restrict ourselves  to nice functions $u$.  We set
$A :=\{ x\in \overline{I} : \, u(x)  >  0 \} $. The set
$$
\{  c >0:\ \exists  x \in A,\ \mbox{  such that } \ u(x)  =  c \}
$$  
can  be  subdivided  into  a  finite number of intervals $U_j $, ($j = 1,\ldots, \kappa $), with the following property: 
\\
For every $ c\in U _j$, the equation $u(x )=c$ has exactly  $n$, ($n = n(j )$, $n\in \mathbb{N}$), solutions $x = x_k ^j $, ($k = 1,..., n$), with
$x^j _1 \leq x^j _2 \leq \ldots \leq x_n ^j$. 
\\
Thus $u$ can be represented in each $U _j$ by the inverse functions 
$$
x = x^j _k ( u), \quad (k = 1,..., n). 
$$
We compute in the domain $U_j $:
\begin{eqnarray*}
u^{\prime} (x^j _k ) & = & \left( \frac{d x_k ^j }{du} \right) ^{-1} \left\{ 
\begin{array}{ll} 
>0 & \mbox{ if $n-k$ is odd}
\\
<0 & \mbox{ if $n-k$ is even} 
\end{array}
\right.
.
\end{eqnarray*} 
Note, that all the derivatives of $x^j_k  $,  ($k  =  1,..., n$), are constant in $U_j $. Therefore $u^* $ is a nice function too, and the equation $u^* = u^*(x)$ has
for  every  $u \in U _j $,  ($j  =  1,..., \kappa $), the (unique)  solution 
$$
x  =  X ^j  = \sum_{k=1} ^{n} x_k ^j (-1) ^{n-k} >0.
$$
This implies 
\begin{eqnarray*}
(u^* )^{\prime} (X^j ) & = & - \left( \sum_{k=1} ^{n} \left| \frac{d x_k ^j }{d u} \right| \right) ^{-1}  \ .
\end{eqnarray*}
The inequality (\ref{dirichletonedim1}) then becomes
\begin{eqnarray}
\label{dirichletonedim2} & &
 \sum_{j=1} ^{\kappa} \int_{U _j } \sum_{k=1} ^{n} G\left(   \frac{f( x_k ^j )}{ \left|d x_k ^j /d u \right|}\right) \cdot \left| \frac{ dx_k ^j }{d u} \right| \, du  
\\
\nonumber
& \geq & 
\sum_{j=1} ^{\kappa} 
\int_{U_j } 
 G\left( 
 \frac{f
 \left( 
 \displaystyle{\sum_{k=1} ^{n} } x_k ^j (-1) ^{n-k} \right)
 }{ 
\displaystyle{\sum_{k=1} ^{n} } \left|d x_k ^j /d u \right|} \right) \cdot \left( \sum_{k=1} ^{n} \left| \frac{d x_k ^j }{ d u}\right| \right)  \, du ,
\end{eqnarray}
 where we omitted the argument $u$ in the functions $G$ and $x_k ^j $. 
\\
To prove (\ref{dirichletonedim2}), it is sufficient to show the corresponding inequality for the integrands for each $u$ lying in one of the intervals $U_j $. 
\\
We  fix  $u$  and $j$  and set  
$x_k := x^j _k $ and
$b_k := |d x_k /d u|$.
Then (\ref{dirichletonedim2}) reduces to
\begin{equation}
\label{dirichletonedim3}
\qquad T := \sum_{k=1} ^n G\left(
\frac{f(x_k)}{b_k} \right) \cdot b_k  \geq
\\
 G\left( 
\frac{
f\left( \sum_{k=1}^n x_k (-1) ^{n-k} 
\right) 
}{
 \sum_{k=1} ^n b_k } \right) \cdot \left( \sum_{k=1} ^n b_k \right) =: T^*. 
\end{equation}    
Since the mapping $z \longmapsto G(z, v )$ is convex and 
nondecreasing,   Jensen's inequality together with Lemma \ref{basiclemma} gives
$$
T \geq  G \left(
\frac{
\sum_{k=1} ^n f( x_k)}{\sum_{k=1} ^n b_k }  
 \right) \cdot \left( \sum_{k=1} ^n b_k \right) 
  \geq T^* .
  $$  
  This proves (\ref{dirichletonedim3}) and therefore (\ref{dirichletonedim2}) .
\end{proof}
\begin{theorem}
\label{landesgeneral}Let 
$$
G:   \overline{\R_+} \times  \overline{\R _+} \to \overline{\R_+} , \ \ G= G( z , v ) ,
$$
be continuous in all the arguments, and assume that 
\begin{eqnarray}
\label{G/z} 
&& G(0,v )=0 \quad \mbox{ and the mapping }\ 
z\longmapsto \frac{G(z,v)}{z} 
\ \mbox{ 
is nondecreasing for every $v\in \R  $. }
\end{eqnarray}
Then for every $u\in L $ we have
\begin{equation}
\label{dirichletlandes} 
\int_{I } G\left( |u^{\prime}|,u \right) \cdot f \, dx \geq 
\int_{I} G\left( |(u^*)^{\prime}|,u \right) \cdot f \, dx.
\end{equation}
\end{theorem}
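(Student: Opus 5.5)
We argue exactly as in the proof of Theorem \ref{Polyaszego1}: we reduce to nice functions and then compare the two integrands level by level in the variable $u$. On the right-hand side of (\ref{dirichletlandes}) we read the second argument of $G$ as $u^*$, the natural counterpart of (\ref{dirichletonedim1}). The only structural difference from Theorem \ref{Polyaszego1} is that Jensen's inequality is replaced by the monotonicity of $z\mapsto G(z,v)/z$ from (\ref{G/z}).

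\emph{Step 1 (nice functions).} By Remark \ref{rem3} we take $u$ to be a nice function. As in the proof of Theorem \ref{Polyaszego1}, we split the set of positive values of $u$ into finitely many intervals $U_j$. On each $U_j$ the equation $u(x)=c$ has exactly $n=n(j)$ solutions $x_k^j(c)$, each with constant derivative. We set $b_k:=|dx_k^j/du|$ and $B:=\sum_{k=1}^n b_k$. Then $u^*$ is nice, its level point is $X^j=\sum_{k=1}^n x_k^j(-1)^{n-k}$, and $|(u^*)'(X^j)|=1/B$. The set $\{u=0\}$ contributes nothing, since $u'=0$ a.e. there and $G(0,v)=0$; the same holds for $u^*$. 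Changing variables $x=x_k^j(u)$ on each monotone piece rewrites (\ref{dirichletlandes}) as
\begin{equation*}
\sum_{j}\int_{U_j}\sum_{k=1}^n G\Big(\frac{1}{b_k},u\Big)\, b_k\, f(x_k^j)\,du \;\geq\; \sum_j\int_{U_j} G\Big(\frac1B,u\Big)\, B\, f(X^j)\,du .
\end{equation*}
It therefore suffices to prove the inequality between the integrands for fixed $j$ and fixed $u$.

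\emph{Step 2 (pointwise inequality).} Put $H(z,v):=G(z,v)/z$ for $z>0$. This is nonnegative because $G\geq 0$, and it is nondecreasing in $z$ by (\ref{G/z}). The left integrand equals $\sum_k H(1/b_k,u)f(x_k)$, and the right integrand equals $H(1/B,u)f(X)$. Since $b_k\leq B$, we have $1/b_k\geq 1/B$, hence $H(1/b_k,u)\geq H(1/B,u)\geq0$. Using this together with $f\geq 0$, and then Lemma \ref{basiclemma} (the points $x_1\leq\cdots\leq x_n$ lie in $\overline I$), we get
\begin{equation*}
\sum_{k=1}^n H\Big(\frac1{b_k},u\Big)f(x_k)\;\geq\; H\Big(\frac1B,u\Big)\sum_{k=1}^n f(x_k)\;\geq\; H\Big(\frac1B,u\Big) f\Big(\sum_{k=1}^n x_k(-1)^{n-k}\Big).
\end{equation*}
Integrating over $U_j$ and summing over $j$ gives (\ref{dirichletlandes}) for nice functions.

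\emph{Main obstacle: passing from nice functions to general $u\in L$.} Take nice $u_m\to u$ in the norm $\Vert|\cdot|\Vert$. The left-hand sides converge by continuity of $G$ and dominated convergence, since all supports lie in a common compact set and the derivatives are uniformly bounded. On the right, the rearrangement is nonexpansive in sup-norm, so $u_m^*\to u^*$ uniformly, but the derivatives $(u_m^*)'$ converge only weakly-$*$ in $L^\infty$. I would handle this with lower semicontinuity. Where $G$ is additionally convex in $z$ (as in the model $G=z^p g(v)$), weak lower semicontinuity of convex integrands suffices. Otherwise, I would first try to choose the approximants so that $(u_m^*)'\to(u^*)'$ a.e. Failing that, I would follow the paper's convention and appeal to Remark \ref{rem3}, as in the proof of Theorem \ref{Polyaszego1}.
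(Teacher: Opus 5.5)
Your reduction to nice functions and your Step 2 are exactly the paper's proof. The paper ends with the same chain $R\ge G(1/\sum b_k)(\sum b_k)\sum f(x_k)\ge R^*$, obtained from \eqref{G/z} and Lemma~\ref{basiclemma}. Your treatment of $\{u=0\}$ via $G(0,v)=0$ is a useful addition.

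There is, however, one concrete gap, which the paper's written proof shares because it borrows the level-point formula from the proof of Theorem~\ref{Polyaszego1}. The point $X^j=\sum_k x_k^j(-1)^{n-k}$ is where $u^*=c$ only if $u<c$ to the right of $x_n$. If $\alpha<+\infty$, nothing in the definition of $L$ forces $u(\alpha)=0$. For levels $c<u(\alpha)$ the set $\{u>c\}$ ends with $(x_n,\alpha)$, the signs of $u'(x_k)$ are reversed, and $|\{u>c\}|=\alpha-\sum_k x_k(-1)^{n-k}$. You must then use \eqref{basiccond2} to rewrite $f(X^j)=f(\sum_k x_k(-1)^{n-k})$ before Lemma~\ref{basiclemma} applies, as in cases (ii) and (iv) of Theorem~\ref{isopR}. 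Your argument never invokes \eqref{basiccond2}, and it cannot be complete without it. Take $\alpha=1$, $f(x)=2-x$ (which satisfies \eqref{basiccond}), $G(z,v)=z$, and the nice function $u(x)=(x-1+\delta)_+$ with $0<\delta<1$. Then the left side of \eqref{dirichletlandes} is $\delta(1+\delta/2)$, while the right side is $\delta(2-\delta/2)$.

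Your worry about the density step is also justified. The paper only asserts the reduction, and for $G$ not convex in $z$, convergence $u_m\to u$ in $\Vert|\cdot|\Vert$ a priori gives only weak-$*$ convergence of $(u_m^*)'$, which does not control the right-hand side. The hypothesis $G(0,\cdot)=0$ lets you skip approximation entirely:
\begin{itemize}
\item For $u\in L$, apply the area formula on $\{u'\neq0\}$ and on $\{(u^*)'\neq0\}$. The critical sets contribute nothing because $G(0,\cdot)=0$.
\item For a.e.\ $t>0$, the set $u^{-1}(t)$ is finite and consists of points where $u'$ exists and is nonzero.
\item With $\mu(t)=|\{u>t\}|$, one has $1/|(u^*)'(\mu(t))|=-\mu'(t)=\sum_{u(x)=t}1/|u'(x)|$ for a.e.\ $t$, because the critical set contributes only a singular part to $d\mu$.
\end{itemize}
Your Step 2, corrected as above, then holds for a.e.\ $t$ and integrates to \eqref{dirichletlandes}.
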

\begin{proof} We may restrict ourselves to nice functions. Using the notation of the proof of Theorem \ref{Polyaszego1}, inequality (\ref{dirichletlandes}) becomes
\begin{eqnarray}
\label{landes1} & &
 \sum_{j=1} ^{\kappa} \int_{U _j } \sum_{k=1} ^{n} G\left(   \frac{1}{ \left|d x_k ^j /d u \right|}\right) \cdot \left| \frac{ dx_k ^j }{d u} \right| \cdot f(x_k^j )\, du  
\\
\nonumber
& \geq & 
\sum_{j=1} ^{\kappa} 
\int_{U_j } 
 G\left( \frac{1}{ 
\displaystyle{\sum_{k=1} ^{n} } \left|d x_k ^j /d u \right|} \right) \cdot \left( \sum_{k=1} ^{n} \left| \frac{d x_k ^j }{ d u}\right| \right)  \cdot f \left( \sum_{k=1} ^n x_k ^j (-1) ^{n-k} \right)\, du ,
\end{eqnarray}
 where we omitted the argument $u$ in the functions $G$ and $x_k ^j $. 
To prove (\ref{landes1}), it is sufficient to show the corresponding inequality for the integrands, for each $u$ lying in one of the intervals $U_j $. 
\\
Fixing  $u$  and $j$  and setting  
$x_k := x^j _k $ and
$b_k := |d x_k /d u|$,
 (\ref{landes1}) reduces to
\begin{eqnarray}
\label{landes2}
\qquad R & := & \sum_{k=1} ^n G\left(
\frac{1}{b_k} \right) \cdot b_k  \cdot f (x_k )
\\
\nonumber & \geq &
 G\left( 
\frac{1}{
 \sum_{k=1} ^n b_k } \right) \cdot \left( \sum_{k=1} ^n b_k \right) \cdot f\left( \sum_{k=1} ^n x_k (-1) ^{n-k} \right) =: R^*. 
\end{eqnarray}  
By Lemma \ref{basiclemma} and assumption (\ref{G/z}) we obtain  
$$
R\geq G \left( \frac{1}{ \sum_{k=1} ^n b_k } \right) \cdot \left( \sum_{k=1} ^n b_k \right) \cdot \sum_{k=1} ^n f(x_k ) \geq R^* .
$$
This proves (\ref{landes2}) and therefore (\ref{landes1}). 
\end{proof}
\begin{remark}
\label{landes}
Theorem \ref{landesgeneral} generalizes Theorem 2.1, part (ii), of \cite{landes}, where the special case of a nondecreasing $f$ and a convex $G$ was treated.
\end{remark}

\section{Weighted  inequalities on cylinders }
\label{sec3}
\setcounter{section}{3}
\setcounter{equation}{0} 
\noindent
Let us now turn to the $N$-dimensional case. 
Let $\Omega '$ be a bounded domain in $\R ^{N-1 } $ with Lipschitz boundary, and let $\Omega $ be the cylinder  $\Omega ' \times I $. 
\\
By $S ^N$ we denote the class of Lebesgue measurable functions $u: \overline{\Omega } \to \overline{\R _+}$ satisfying
\begin{equation}
\label{require2}   
|\{ y \in I :\, u(x',y )>c \}|< +\infty
\quad \mbox{ 
for all $x'\in \Omega ' $ and all $c > 0 $.}
\end{equation}
Further, let 
$L ^N$ denote the set of functions $u\in S^N$  
which are  Lipschitz  
continuous
and have compact support in $\overline{\Omega}$.
\\
Finally, if $u$ is weakly differentiable, we write for its gradient: 
$$
\nabla u = (\nabla' u, u_y ), \ \mbox{ where }\ \nabla' u =( u_{x_1 } , \ldots , u_{x_{N-1} } ).
$$ 
Next we give the definition of the 
decreasing rearrangement in direction $y$ (see, for example,  \cite{K1}).
Recall, that for any measurable  set $M\subset I $, $M^*$ denotes its decreasing rearrangement.
\begin{definition}
\label{defnonincreasing} 
Let
$ u \in S ^N$. Then the function $u^* $ defined by
$$
u^* (x', y ) := \sup \{ 
c > 0 :\,  y \in \{ u(x', \cdot ) > c \}^* \} , \quad ((x',y) \in \Omega ),
$$
is called the {\sl decreasing rearrangement of $u$ in direction $y$}.
\end{definition}
\begin{remark}
\label{rem1multidim} 
By definition $u^*(x', y )$ is non-increasing in $y$. Moreover we have $\{ u(x', \cdot ) > c \} ^*  = \{ u^*(x',\cdot  ) > c\} $ for every $x' \in \Omega '$ and every $c> 0$.
It is also well-known that if $u$ is continuous, then the  sets $\{ u > c\} $ are open and the  function $u^* $ has a  pointwise representative which is continuous, too. Moreover, if $u\in L ^N$, then we also have $u^* \in L^N$. 
\end{remark}
\begin{remark}
\label{char}
For completeness,
 we define the decreasing rearrangement $M^* $ of a set $M\subset \Omega $ via the rearrangement of its characteristic function:
\begin{equation}
\label{characteristic}
\chi ( M^* ) = \left( \chi (M) \right) ^*.
\end{equation}
\end{remark}
For the proof of the integral inequalities, we need a certain subclass of functions in $L^N $, called {\sl nice functions}.
\begin{definition}
\label{nicefN} {\sl (Nice functions)}
A function $u\in  L ^N $ is called a {\sl nice function} if it is piecewise linear (in the sense of affine), and if the equation $u(x', y ) = c$ has for every $x' \in \Omega ' $ and every 
$c > 0 $ only a finite  number of solutions $y = y_k $, ($k = 1,\ldots , m$, with $m\in \mathbb{N}$).
\end{definition}
\begin{remark}
\label{rem2}  
It is well-known that nice functions are dense in $L ^N$ w.r.t. the norm 
\begin{equation}\label{norm}
 \Vert | u|\Vert := \sup  |u| +\sup |\nabla u|  , 
   \end{equation}
(see e.g. \cite{K1}). For convenience of the reader we present a proof:
Every function in $L^N $ can be approximated by functions in $C ^{\infty } (\overline{\Omega }) \cap L^N $. 
Further, every function in $C^{\infty} (\overline{\Omega})\cap L^N $ can be approximated by nonnegative piecewise linear functions with compact support in $\overline{\Omega }$ . If $u$ is nonnegative and piecewise linear with support in $\overline{\Omega ' } \times [0,R] $, ($R > 0$), then set
\begin{eqnarray*}
\varepsilon _0 & := & \min \left\{ \left| u_y (x) \right| :\ u_y (x) \mbox{ exists and is } \ \not= 0 \right\} >0 ,
\\
v(x) & := & \left\{ 
\begin{array}{ll}
1- y/R  & \mbox{ if }\ 0\leq y <R \\
0 & \mbox{ if }\ y \geq R 
\end{array}
\right. 
\end{eqnarray*}
The functions $u_{\varepsilon} := u + \varepsilon v$ are nice functions if $0 < \varepsilon  < \varepsilon _0$ and $u_{\varepsilon}$ converges to $u$  in the norm \eqref{norm} as $\varepsilon$ tends to zero. 
\end{remark}
\hspace*{0.3cm}In this section we assume
\begin{eqnarray}
\label{newcond}
&& f\in C(\overline{\Omega }), f(x)>0 \ \forall x\in \Omega ,
\\
\label{newcond2} 
&&
f(x', y_2 -y_1 ) \leq f(x', y_1 ) + f(x', y_2 ) \quad \mbox{ if $y_1, y_2 \in \overline{I}$ with $  y_1 \leq  y_2$   and  $ x' \in \overline{\Omega ' } $,}
\\
\label{newcond3}
&&
\mbox{if $\alpha <+\infty $, then also $f(x', y)= f(x', \alpha -x) $ $ \forall (x',y)\in \overline{\Omega }.$}
\end{eqnarray}
\begin{remark}
\label{crucialremark}
Observe that the arguments from Remark \ref{commentsonf} and Lemma \ref{basiclemma} lead to analogous properties for functions $f$ depending on $(x',y)$. In particular, there holds
\begin{eqnarray}
\label{basicineq1}
&& f\left( x', \sum_{k=1} ^n y_k (-1)^{n-k}\right)  \leq  \sum_{k=1} ^n f(x' , y_k )
\\
\nonumber
& & \mbox{whenever }\ x' \in \overline{\Omega ' }\ \mbox{ and } y_1 , \ldots , y_n \in \overline{I} \mbox{ with }  y_1  \leq \ldots \leq y_n .  
\end{eqnarray}
\end{remark}
The following Polya-S\"zego type inequality for Lipschitz continuous functions holds true
\begin{theorem}
\label{PolyaszegoCylinder} Let 
$$
G:  \mathbb{R} ^{N-1} \times \overline{\R_+} \times  \overline{\R _+}  \times \Omega '  \to
\overline{I} , \ \ G= G( z' , z_N , v, x' ) ,
$$
be continuous in all the arguments, convex in $z= (z' , z_N )$ and non-decreasing in $z_N $. 
Finally, let $u\in L^N $. Then $u^* \in L^N $ and there holds
\begin{equation}
\label{dirichlet1}
\int_{\Omega } G\left(\nabla' u , f|u_y |, u , x' \right) \, dx \geq  
 \int_{\Omega } G \left( \nabla' u^* , f |u^* _y |, u^*, x' \right) \, dx .
 \end{equation}  
\end{theorem}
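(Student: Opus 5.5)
By Remark \ref{rem2}, nice functions are dense in $L^N$ with respect to the norm \eqref{norm}. Both sides of \eqref{dirichlet1} are continuous under this convergence, because $G$ is continuous, the supports stay in a fixed compact set, and the rearrangement is nonexpansive in the $\sup$-norm. For the gradient terms I would argue as follows: a uniform Lipschitz bound on $u_m$ gives a uniform Lipschitz bound on $u_m^*$, and uniform convergence of $u_m^*$ then gives weak-$*$ convergence of the gradients. Lower semicontinuity of the convex functional on the right-hand side then suffices, since we only need ``$\geq$''. So I will reduce to nice $u$.

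For a nice $u$, I would work with the level-set parametrization in $y$, now depending on $x'$. After a finite decomposition of $\Omega'$ and of the range of $u$ into pieces $D_j$ in the variables $(x',c)$, the equation $u(x',y)=c$ has exactly $n=n(j)$ solutions $y=y_k(x',c)$, with $y_1\le\dots\le y_n$. Each $y_k$ is affine on $D_j$, and $u_y(x',y_k)$ alternates in sign. Then $u^*(x',\cdot)=c$ exactly at $Y=\sum_k y_k(-1)^{n-k}$. Differentiating $u(x',y_k(x',c))=c$ gives
\[
u_y=\frac{1}{\partial_c y_k}, \qquad \nabla' u=-\frac{\nabla' y_k}{\partial_c y_k}.
\]
The same computation for $u^*$ gives
\[
(u^*)_y=-\Big(\sum_k b_k\Big)^{-1}, \qquad \nabla' u^*=-\frac{\nabla' Y}{\sum_k b_k}\cdot(\pm 1),
\]
where $b_k=|\partial_c y_k|$ and $\nabla' Y=\sum_k(-1)^{n-k}\nabla' y_k$. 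I would check the sign convention carefully here. Changing variables $dy=b_k\,dc$ on each branch reduces \eqref{dirichlet1} to a pointwise inequality for each fixed $(x',c)$:
\[
\sum_k b_k\, G\Big(\frac{\xi_k}{b_k},\frac{f(x',y_k)}{b_k},c,x'\Big)\ \ge\ B\, G\Big(\frac{\xi^*}{B},\frac{f(x',Y)}{B},c,x'\Big),
\]
where $B=\sum_k b_k$, $\xi_k=\pm\nabla' y_k$ with signs chosen so that $\sum_k\xi_k=\xi^*:=\pm\nabla' Y$, and $\xi^*/B=\nabla'u^*$.

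Two ingredients then finish the pointwise inequality. First, Jensen's inequality for the jointly convex map $z\mapsto G(z,c,x')$ with weights $b_k/B$ gives the lower bound
\[
B\,G\Big(\frac{\sum_k\xi_k}{B},\frac{\sum_k f(x',y_k)}{B},c,x'\Big).
\]
This is where convexity in the full variable $z=(z',z_N)$, rather than convexity in $z_N$ alone, is needed. Second, Remark \ref{crucialremark}, \eqref{basicineq1}, gives $f(x',Y)\le\sum_k f(x',y_k)$, and since $G$ is nondecreasing in $z_N$ this yields the right-hand side.

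The main obstacle is the sign bookkeeping in the $x'$-gradient. I need $\nabla' u(x',y_k)/|u_y|$, with the correct signs, to sum exactly to the corresponding expression for $u^*$. This holds because $u_y$ alternates sign along the $y_k$ in the same pattern as the coefficients $(-1)^{n-k}$ in $Y$. If $u$ is odd-symmetric in $z'$ there is no issue, but for general $G$ the exact identity $\sum_k(\text{branch contributions})=\nabla'u^*\cdot B$ must be verified, and I expect it to follow from $\nabla' u=-\nabla' y_k/\partial_c y_k$ with $\operatorname{sgn}\partial_c y_k=\operatorname{sgn}u_y=-(-1)^{n-k}$. A secondary issue is the boundary cases $y_1=0$ or $y_n=\alpha$, which are handled by the symmetry condition \eqref{newcond3} as in Theorem \ref{isopR}. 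The pieces where the branch structure degenerates are handled by noting that they have measure zero for nice functions.
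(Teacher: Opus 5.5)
Your proposal is correct and follows essentially the same route as the paper: you reduce to nice functions, parametrize the level sets by $y=y_k(x',c)$ with alternating signs, compute $u_y$, $\nabla'u$, $u^*_y$ and $\nabla'u^*$, and close the pointwise inequality with Jensen for the jointly convex $G$ (weights $b_k/B$), followed by \eqref{basicineq1} and monotonicity in $z_N$. The sign you left open comes out as you predicted: $\xi_k=(-1)^{n-k}\nabla' y_k$ and $\nabla'u^*=\nabla'Y/B$, so $\sum_k\xi_k=\xi^*$ exactly. (The paper displays both of these with an extra common factor $-1$, which is harmless.) Your lower-semicontinuity argument for the approximation step and your treatment of the boundary case where $\{u(x',\cdot)>c\}$ reaches $y=\alpha$ via the symmetry of $f$ fill in points the paper's proof passes over.
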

\begin{proof} In view of Remark \ref{rem2} we may restrict ourselves  to nice functions $u$.  We set
$$
A :=\left\{ (x',y) \in \Omega :\, u(x',y)  > 0 \right\} .
$$ 
The domain	
$$
\{ (x', u)\in \mathbb{R} ^{N-1} \times \R _+ : \, \exists  (x', y) \in \Omega ,\ \mbox{  such that } \ u  =  u(x', y ) \}
$$  
can  be  subdivided  into  a  finite number of subdomains $\Omega _j $ , ($j = 1,\ldots, \kappa $), with the following property: 
\\
For every $(x', u)\in\Omega _j$ the equation $u = u(x', y )$ has exactly  $n$, ($n = n(j )$, $n\in \mathbb{N}$), solutions $y = y_k ^j $, ($k = 1,..., n$), with
$0\leq y^j _1 \leq y^j _2 \leq \ldots \leq y_n ^j$. 
\\
Thus $u$ can be represented in each $\Omega _j$ by the inverse functions 
$$
y = y^j _k (x', u), \quad (k = 1,..., n). 
$$
We compute in the domain $\Omega _j$:
\begin{eqnarray*}
u_y (x' , y^j _k ) & = & \left( \frac{\partial y_k ^j }{\partial u} \right) ^{-1} \left\{ 
\begin{array}{ll} 
>0 & \mbox{ if $n-k$ is odd}
\\
<0 & \mbox{ if $n-k$ is even} 
\end{array}
\right.
,
\\
\nabla' u(x' , y_k ^j ) & =& - \nabla' y_k ^j \cdot \left( \frac{\partial y_k ^j}{\partial u} \right) ^{-1} .
\end{eqnarray*} 
Here we omitted the arguments $(x', u)$ in the functions  on  the  right-hand sides. Note, that all the derivatives of $y^j_k  $,  ($k  =  1,..., n$), are constant in $\Omega _j $. Therefore $u^* $ is a nice function too, and the equation $u^* = u^*(x', y )$ has
for  every  $(x', u) \in \Omega _j $,  ($j  =  1,..., \kappa $), the (unique)  solution 
$$
y  =  Y ^j  = \sum_{k=1} ^{n} y_k ^j (-1) ^{n-k} >0.
$$
This implies 
\begin{eqnarray*}
u^* _y (x', Y^j ) & = & - \left( \sum_{k=1} ^{n} \left| \frac{\partial y_k ^j }{\partial u} \right| \right) ^{-1}  \quad \mbox{and}
\\
\nabla' u^* (x', Y^j ) & = & \left( \sum_{k=1} ^{n} \nabla' y_k ^j (-1) ^{n-k+1} \right) \cdot \left(  \sum_{k=1} ^{n} \left| \frac{\partial y_k ^j }{\partial u} \right| \right) ^{-1} .
\end{eqnarray*}
The inequality (\ref{dirichlet1}) then becomes
\begin{eqnarray}
\label{dirichlet2} & &
 \sum_{j=1} ^{\kappa} \int_{\Omega _j } \sum_{k=1} ^{n} G\left(  \frac{\nabla' y_k ^j  (-1)^{n-k+1} }{\left|\partial y_k ^j /\partial u \right|} , \frac{f( y_k ^j )}{ \left|\partial y_k ^j /\partial u \right|}\right) \cdot \left| \frac{\partial y_k ^j }{\partial u} \right|\, dx' \, du  
\\
\nonumber
& \geq & 
\sum_{j=1} ^{\kappa} 
\int_{\Omega_j } 
 G\left( 
 \frac{\displaystyle{\sum_{k=1} ^{n} }
 \nabla' y_k ^j (-1)^{n-k+1}}{ 
 \displaystyle{\sum_{k=1} ^{n} }\left|\partial y_k ^j /\partial u\right| } , 
 \frac{f
 \left( 
 \displaystyle{\sum_{k=1} ^{n} } y_k ^j (-1) ^{n-k} \right)
 }{ 
\displaystyle{\sum_{k=1} ^{n} } \left|\partial y_k ^j /\partial u \right|} \right) \cdot \left( \sum_{k=1} ^{n} \left| \frac{\partial y_k ^j }{ \partial u}\right| \right)  \, dx' \, du ,
\end{eqnarray}
 where we omitted the arguments $x'$ and  $u$ in the functions $G$ , $y_k ^j $ and $f$. 
\\
To prove (\ref{dirichlet2}), it is sufficient to show the corresponding inequality for the integrands for each $(x', u)$ lying in one of the domains $\Omega_j $. 
\\
We  fix $x'$, $u$  and $j$  and set  
$y_k := y^j _k $, $\overrightarrow{c_k }:= \nabla' y_k (-1)^{n-k+1} $ and
$b_k := |\partial y_k /\partial u|$.
Then (\ref{dirichlet2}) reduces to
\begin{equation}
\label{dirichlet3}
\qquad T := \sum_{k=1} ^n G\left( \frac{\overrightarrow{c_k}}{b_k},
\frac{f(y_k)}{b_k} \right) \cdot |b_k|  \geq
\\
 G\left( 
 \frac{\sum_{k=1} ^n \overrightarrow{c_k} 
 }{
 \sum_{k=1} ^n b_k} , 
\frac{
f\left( \sum_{k=1}^n y_k (-1) ^{n-k} 
\right) 
}{
 \sum_{k=1} ^n b_k } \right) \cdot \left( \sum_{k=1} ^n b_k \right) =: T^*. 
\end{equation}   
Since the mapping $(z', z_N) \longmapsto G(z',z_N, v, x' )$ is convex and non-decreasing in the second variable,   Jensen's inequality together with Remark  \ref{crucialremark} gives
$$
T \geq  G \left(  \frac{\sum_{k=1} ^n \overrightarrow{c_k} 
 }{
 \sum_{k=1} ^n b_k} , 
\frac{
\sum_{k=1} ^n f( y_k)}{\sum_{k=1} ^n b_k }  
 \right) \cdot \left( \sum_{k=1} ^n b_k \right) 
  \geq T^* .
  $$  
  This proves (\ref{dirichlet3}) and therefore (\ref{dirichlet1}) .
\end{proof}
\begin{theorem} 
\label{Polyaszego2} Let $G$ be as in Theorem \ref{PolyaszegoCylinder}. Furthermore, assume that $p\in [1, +\infty ) $, $f\in L^\infty(\Omega )$ and 
there exists a positive constant 
 $C>0$ such that 
\begin{equation} 
\label{sobolev}
 0\le G( z' , z_N ,  v,x')  \leq C \left( 1+ |v|^p + |z|^p \right)\ ,
\end{equation}
for all $(z', z_N,  v,x') \in \mathbb{R} ^{N-1} \times 
\overline{\mathbb{R}_+ }  \times \overline{\R _+} \times \Omega ' $.
\\
If $u \in W^{1,p} (\Omega )\cap S^N $, then  $u^*\in  W^{1,p} ( \Omega  )\cap S^N $ and inequality \eqref{dirichlet1} still holds.
\end{theorem}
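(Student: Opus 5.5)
We will argue by approximation from Theorem \ref{PolyaszegoCylinder}. Let $u\in W^{1,p}(\Omega)\cap S^N$. We first reduce to the case of compact support in $\overline{\Omega}$. Take cutoffs $\eta_R(y)$, equal to $1$ on $[0,R]$, vanishing for $y\ge 2R$, with $|\eta_R'|\le 2/R$; this is only needed when $\alpha=+\infty$. Then $u\eta_R\to u$ in $W^{1,p}(\Omega)$. Moreover, since $\Omega'$ is bounded and Lipschitz, the functions in $L^N$ are dense in the nonnegative part of $W^{1,p}(\Omega)$ with compact support in $\overline\Omega$. To see this, extend across $\partial\Omega'\times I$ and across $y=0$ by reflection, mollify, and take positive parts, which preserves nonnegativity. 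So we obtain $u_m\in L^N$ with $u_m\ge 0$, $u_m\to u$ in $W^{1,p}(\Omega)$, and, after passing to a subsequence, $u_m\to u$ and $\nabla u_m\to\nabla u$ a.e. with an $L^p$ dominating function. By Theorem \ref{PolyaszegoCylinder},
\[
\int_\Omega G(\nabla' u_m, f|(u_m)_y|,u_m,x')\,dx \ \ge\ \int_\Omega G(\nabla' u_m^*, f|(u_m^*)_y|,u_m^*,x')\,dx .
\]

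\emph{Left-hand side.} Since $f\in L^\infty$, the growth condition (\ref{sobolev}) gives $G(\cdots)\le C(1+|u_m|^p+(1+\|f\|_\infty)^p|\nabla u_m|^p)$. Hence the left side converges to $\int_\Omega G(\nabla' u,f|u_y|,u,x')dx$ by the generalized dominated convergence theorem and the continuity of $G$. One caveat: the constant $1$ in (\ref{sobolev}) is not integrable if $|\Omega|=\infty$. We handle this by noting that we may assume $G(0,0,0,x')=0$, or more safely by applying the argument on $\Omega'\times(0,R)$ first. The point is that $u_m$ and $u_m^*$ vanish outside a bounded set up to the same measure, so the contributions of the constant agree on both sides and can be subtracted.

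\emph{Right-hand side.} This is the main step. First, rearrangement in $y$ is a contraction in $L^p$ on each fiber, $\|u_m^*(x',\cdot)-u^*(x',\cdot)\|_{L^p(I)}\le\|u_m(x',\cdot)-u(x',\cdot)\|_{L^p(I)}$. Integrating in $x'$ gives $u_m^*\to u^*$ in $L^p(\Omega)$ and, for a subsequence, a.e. Next, apply Theorem \ref{PolyaszegoCylinder} with the admissible choice $G=|z|^p+|v|^p$ and $f$ replaced by $1$. The constant weight satisfies (\ref{newcond2}) and (\ref{newcond3}) trivially, and the theorem gives $\|u_m^*\|_{W^{1,p}}\le\|u_m\|_{W^{1,p}}$, which is bounded. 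For $p>1$ we conclude $u^*\in W^{1,p}(\Omega)$ and $\nabla u_m^*\rightharpoonup\nabla u^*$ weakly in $L^p$, by reflexivity and identification of the limit through the $L^p$ convergence. For $p=1$ we obtain at first only $u^*\in BV$. To get $W^{1,1}$ we show the derivatives $\nabla u_m^*$ are equi-integrable. For this we apply the theorem with $G=\Phi(|z|)$, where $\Phi$ is a convex superlinear function with $\int\Phi(|\nabla u|)<\infty$ (de la Vallée Poussin), approximating along the same sequence. Membership of $u^*$ in $S^N$ is clear from equimeasurability.

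It remains to pass to the limit in $\int G(\nabla' u_m^*,f|(u_m^*)_y|,u_m^*,x')$. The integrand is convex in the gradient variables, nonnegative and continuous, and $u_m^*\to u^*$ strongly. The standard lower semicontinuity theorem for such functionals (Ioffe/De Giorgi type) then gives
\[
\int_\Omega G(\nabla' u^*,f|u^*_y|,u^*,x')\,dx\le\liminf_m\int_\Omega G(\nabla' u_m^*,f|(u_m^*)_y|,u_m^*,x')\,dx .
\]
There is one subtlety: the map $z_N\mapsto G(\cdot,f|z_N|,\cdot)$ must be convex in $z_N\in\mathbb R$. This holds because $G$ is convex and nondecreasing in $z_N\ge0$ and $f\ge0$, and $\tilde G(z',z_N,v,x'):=G(z',f(x)|z_N|,v,x')$ remains jointly convex in $(z',z_N)$. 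The dependence on $y$ through $f$ is only measurable, which is allowed in Ioffe's theorem provided $f$ is continuous, and (\ref{newcond}) ensures that it is.

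Combining the three limits yields (\ref{dirichlet1}) for $u$. The expected main obstacle is the right-hand side passage: justifying weak convergence of $\nabla u_m^*$ (especially for $p=1$) together with the applicability of lower semicontinuity, given that $G$ depends on $x$ only through $x'$ and $f$.
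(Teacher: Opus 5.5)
Your proposal is correct and takes essentially the same route as the paper: approximate by nonnegative Lipschitz functions with compact support, apply Theorem~\ref{PolyaszegoCylinder}, pass to the limit on the left by dominated convergence using (\ref{sobolev}), and on the right use $u_m^*\rightharpoonup u^*$ in $W^{1,p}$ together with lower semicontinuity of the functional, which is convex in the gradient. Where the paper only cites \cite{K1} for the weak convergence and \cite{Br} for $p=1$, you supply the details yourself (the fiberwise $L^p$-contraction, the uniform bound from Theorem~\ref{PolyaszegoCylinder} with $f\equiv 1$, and a de la Vall\'ee Poussin/Dunford--Pettis argument for $p=1$), and you flag the non-integrability of the constant in (\ref{sobolev}) when $\alpha=+\infty$ — the paper's dominated-convergence step ignores this, though your fix for it is only sketched.
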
  
\begin{proof} 
{\sl (i)} First we assume that $p\in (1,+\infty )$ and let $u\in   W^{1,p} ( \Omega) \cap S^N $.
Choose a sequence $\{ u_k \} $ of bounded Lipschitz functions such that
$u_k \longrightarrow u $ in $ W ^{1,p } ( \Omega )$.  
Then $u^*_k\in  L^N $ and the following inequalities hold for any $k$:
\begin{equation}
\label{dirichletk}
\int_{\Omega } G\left(\nabla' u_k , f\left |\frac{\partial u_k} {\partial y}\right |, u_k , x' \right) \, dx \geq  
 \int_{\Omega } G \left( \nabla' u_k^* , f \left |\frac{\partial u_k^*} {\partial y}\right |, u_k^*, x' \right) \, dx\,.
 \end{equation} 
 Since $\{u_k\}$ strongly converges to $u$ in $ W ^{1,p } ( \Omega )$,  $u_k$ are equibounded in $ W ^{1,p } ( \Omega )$ and, up to a subsequence, $u_k\rightarrow u$ and $\nabla u_k\rightarrow \nabla u$ a.e. in $\Omega $. Therefore, by using   \eqref{sobolev}, continuity of $G$ and Lebesgue dominated convergence theorem, we get
\begin{equation}\label{lim1}
\lim_{k\to +\infty} \int_{\Omega } G\left(\nabla' u_k , f\left |\frac{\partial u_k} {\partial y}\right |, u_k , x' \right) \, dx= \int_{\Omega } G\left(\nabla' u , f\left |\frac{\partial u} {\partial y}\right |, u, x' \right) \, dx\,.
\end{equation}
On the other hand, by classical properties of rearrangements (see \cite{K1}), we have that
\begin{equation}
u_k^{*}  \rightharpoonup u^{* } \quad \mbox{in }
W^{1,p}(\Omega  )
\label{convsimm}
\end{equation}
and, by lower semicontinuity of the functional, we get
\begin{equation}\label{lim2}
\liminf_k \int_{\Omega } G\left(\nabla' u_k^* , f\left |\frac{\partial u_k^*} {\partial y}\right |, u_k^* , x' \right) \, dx\ge\int_{\Omega } G\left(\nabla' u^*, f\left |\frac{\partial u^*} {\partial y}\right |, u^*, x' \right) \, dx\,.
\end{equation} 
Combining \eqref{dirichletk} - \eqref{lim2}, the conclusion follows.
\\[0.1cm]
{\sl (ii)} 
The proof in the case $p=1$ and $u\in W^{1,1} (\Omega) $ is analogous as in \cite{Br}, Theorem 1.
\end{proof}

\hspace*{0.3cm}In the special case that $G(z', z_N , v, x') = |z|^p $ we obtain the following 
\begin{corollary}
\label{p=2}
Let $1<p<+\infty$, $f\in L^{\infty } (\Omega )$ and $u\in W^{1,p} (\Omega ) \cap S^N $. Then there holds
\begin{equation}
\label{W1p}
\int_{\Omega } \left\{ |\nabla' u|^2 + f^2 (x',y) (u_y) ^2 \right\} ^{p/2} \, dx \geq  
\int_{\Omega  } \left\{ |\nabla' u^*|^2 + f^2 (x',y) (u^*_y) ^2 \right\} ^{p/2} \, dx .
\end{equation}
\end{corollary}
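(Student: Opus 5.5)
Corollary \ref{p=2} is Theorem \ref{Polyaszego2} applied to one particular integrand, so the proof is mainly a check of hypotheses. The integrand is
$$
G(z', z_N, v, x') := \left( |z'|^2 + z_N^2 \right)^{p/2} = |z|^p ,
$$
defined on $\R^{N-1}\times\overline{\R_+}\times\overline{\R_+}\times\Omega'$. Since $f\geq 0$, substituting $z'=\nabla' u$ and $z_N = f|u_y|$ gives $G = \{|\nabla' u|^2 + f^2 u_y^2\}^{p/2}$. The same holds for $u^*$. So (\ref{dirichlet1}) becomes exactly (\ref{W1p}).

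Next I verify the hypotheses of Theorem \ref{PolyaszegoCylinder}, which Theorem \ref{Polyaszego2} inherits.
\begin{enumerate}
\item[(1)] $G$ is continuous in all variables.
\item[(2)] $G$ is convex in $z=(z',z_N)$, because $z\mapsto |z|$ is a convex norm and $t\mapsto t^p$ is convex and nondecreasing on $\overline{\R_+}$ for $p\geq 1$.
\item[(3)] $G$ is nondecreasing in $z_N$ on $\overline{\R_+}$, since $z_N^2$ is increasing there.
\end{enumerate}
The codomain in Theorem \ref{PolyaszegoCylinder} is written as $\overline{I}$, which looks like a misprint for $\overline{\R_+}$. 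If it is meant literally and $\alpha<\infty$, I would note that its proof uses only nonnegativity, continuity, convexity and monotonicity of $G$, so it applies verbatim.

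The growth condition (\ref{sobolev}) holds with $C=1$, since $0\leq |z|^p \leq 1+|v|^p+|z|^p$. The remaining assumptions, $f\in L^\infty(\Omega)$ and $u\in W^{1,p}(\Omega)\cap S^N$ with $1<p<\infty$, are exactly those of case (i) of Theorem \ref{Polyaszego2}. That theorem therefore gives $u^*\in W^{1,p}(\Omega)\cap S^N$ and (\ref{dirichlet1}) for this $G$, which is (\ref{W1p}).

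The only delicate point sits inside Theorem \ref{Polyaszego2}. The lower semicontinuity step (\ref{lim2}) under weak $W^{1,p}$ convergence of $u_k^*$ needs more than convexity of $G$ in $\nabla' u$ and $u_y$. Here $G$ is a convex function of the vector $(\nabla' u, f u_y)$, and multiplying by the fixed bounded function $f$ preserves weak convergence in $L^p$. So the functional $w\mapsto \int_\Omega |(\nabla' w, f w_y)|^p\,dx$ is convex and strongly continuous on $W^{1,p}(\Omega)$, hence weakly lower semicontinuous. I would point this out explicitly to justify applying Theorem \ref{Polyaszego2} to this integrand. With that, the corollary follows.
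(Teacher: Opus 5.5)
Your proposal is correct and follows the paper's own argument: the corollary is obtained by specializing Theorem~\ref{Polyaszego2} to $G(z',z_N,v,x')=|z|^p$. That integrand is continuous, convex in $z$, nondecreasing in $z_N\ge 0$, and satisfies (\ref{sobolev}) with $C=1$. Your extra remarks are accurate and only make explicit what the paper leaves implicit: the codomain $\overline{I}$ is a misprint for $\overline{\R_+}$, and $w\mapsto\int_\Omega |(\nabla' w, f w_y)|^p\,dx$ is weakly lower semicontinuous because it is convex and strongly continuous.
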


\section{Inequalities involving weighted rearrangement}
\label{sec4}
\setcounter{section}{4}
\setcounter{equation}{0}
\hspace*{0.3cm}If the weight function $f$ is independent of  $x'$ then the results of Section \ref{sec3} can be recast in terms of a {\sl weighted rearrangement}. 
\\
Let $\Omega '$, $I$ and $\Omega $  be as in Section \ref{sec3}, $I_1= (0,\alpha _1 ) $, where $\alpha _1 \in (0, +\infty ]$, and $\Omega _1 = \Omega ' \times I_1 $. Furthermore, let 
\begin{equation}
\label{w1} 
w\in C(\overline{I_1 }), \ \ w(y)>0 \quad \forall y\in I_1 ,  
\end{equation}
and let $W\in C (\overline{I_1 }) $ be its primitive,
\begin{equation}
\label{w2}
W(y):= \int_0 ^y w(t)\, dt , \quad y\in \overline{I_1 }.
\end{equation}
We assume that 
\begin{equation}
\label{w3} 
\alpha = \lim_{y\to \alpha _1 } W(y),
\end{equation}
and that the function $f\in C(\overline{I}) $ defined by 
\begin{equation}
\label{w4}
f(z) := w\left( W^{-1} (z)\right), \quad z\in \overline{I} ,
\end{equation}
satisfies the conditions (\ref{basiccond}) and (\ref{basiccond2}).  
\\
We define function sets $S^N _1 $ and $L^N_1  $ analogously as $S^N$, respectively $L^N $ by replacing $\Omega $ by $\Omega _1  $.
\\
 Now we give the definition of rearrangement with respect to the weight $w$.
\medskip
\begin{definition}
\label{def2} 
Let $v\in S^N _1 $. Setting
\begin{equation}
\label{uv}
u(x', z) := v\left( x', W^{-1} (z)\right) , \quad (x',z)\in  \Omega ,
\end{equation}
let $u^*(x', z)$ be the decreasing rearrangement of $u$ in direction $z$. We call the function $T(v)$, defined by  
\begin{equation}
\label{u*Tv}
T(v)(x', y):= u^* \left( x' , W(y)\right), \quad (x',y)\in \Omega _1 ,
\end{equation}
the {\sl $w$-rearrangement in direction $y$.}
\\
Further, if $M\subset \Omega _1  $ is measurable, we define the {\sl $w$-rearrangement $T(M)$ of $M$} by rearranging the characteristic function of $M$:
\begin{equation}
\label{chiM}
T\left( \chi (M) \right) = \chi (T(M)).
\end{equation} 
\end{definition}
\begin{remark}
\label{rem4} It is easy to see that $T(v) (x',y)$ is non-increasing  in $y$, and the following properties hold:
\begin{equation}
\label{slice}
  \int\limits_{\{a<v(x' , \cdot )<b \} } w(y) \, dy = \int\limits_{\{ a<T(v)(x' , \cdot )<b \} } w(y) \, dy, \quad x' \in \Omega ',
\ 0<a<b.
 \end{equation}
 Further, if $v\in L^N _1 $, then we also have $T(v)\in L^N _1  $.
\end{remark}
Inequality \eqref{dirichlet1} in Theorem \ref{PolyaszegoCylinder} is related to the following inequality for weighted rearrangement via the change of variables
$$
z \longmapsto y= W^{-1} (z).
$$
\begin{theorem}
\label{th2} Let $G$ be as in Theorem \ref{PolyaszegoCylinder} and $v\in L^N _1$. 
Then the following inequality holds 
\begin{equation}
\label{dirichletneu}
\int_{\Omega _1 } G(\nabla' v, |v_y|, v, x')\cdot w(y) \, dx \geq
\int_{\Omega_1 }  G(\nabla' T(v), |(T(v))_y|, T(v), x')\cdot w(y)\, dx.
\end{equation}
where  $v= v(x', y)$ and $T(v) = T(v) (x', y)$.
\end{theorem}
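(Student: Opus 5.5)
We reduce (\ref{dirichletneu}) to Theorem \ref{PolyaszegoCylinder} by the change of variables $z = W(y)$, $y = W^{-1}(z)$, in the $y$-direction, keeping $x'$ fixed. Given $v\in L^N_1$, we define $u$ on $\Omega$ by (\ref{uv}), so that $v(x',y) = u(x',W(y))$. By Definition \ref{def2}, $T(v)(x',y) = u^*(x',W(y))$. The plan is to write both sides of (\ref{dirichletneu}) as integrals over $\Omega$ of $G(\nabla' u, f|u_z|, u, x')$ and of $G(\nabla' u^*, f|u^*_z|, u^*, x')$ respectively, with $f$ given by (\ref{w4}). Theorem \ref{PolyaszegoCylinder} applies to such an $f$, which depends only on $z$ and satisfies (\ref{basiccond}), (\ref{basiccond2}), hence (\ref{newcond2}), (\ref{newcond3}).

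First we check the regularity. Since $w>0$ is continuous on $I_1$, $W$ is a $C^1$ increasing bijection of $I_1$ onto $I$. By the chain rule, at points with $z=W(y)$,
$$
\nabla' v(x',y) = \nabla' u(x',z), \qquad v_y(x',y) = u_z(x',z)\, w(y) = u_z(x',z)\, f(z),
$$
since $f(z) = w(W^{-1}(z)) = w(y)$. Moreover $dz = w(y)\,dy$. Hence
$$
\int_{\Omega_1} G(\nabla' v, |v_y|, v, x')\, w(y)\, dx' dy = \int_{\Omega} G(\nabla' u, f|u_z|, u, x')\, dx' dz ,
$$
and the same identity holds with $(v,u)$ replaced by $(T(v),u^*)$, because $T(v) = u^*\circ(\mathrm{id}, W)$ by definition. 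Combining these two identities with (\ref{dirichlet1}) for $u$ gives (\ref{dirichletneu}) at once.

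The main obstacle is justifying that $u$ is admissible in Theorem \ref{PolyaszegoCylinder}, i.e.\ $u\in L^N$, or at least in a class where (\ref{dirichlet1}) holds. The map $W^{-1}$ need not be Lipschitz near points where $w$ degenerates; at the endpoints $y=0$ or $y=\alpha_1$ we only know $w>0$ in the open interval. There are two cases.
\begin{itemize}
\item If $v$ has compact support in $\overline{\Omega_1}$ avoiding the degenerate region, then $u$ is locally Lipschitz with compact support.
\item In general I would approximate: restrict to compact subintervals of $I_1$ where $w\ge \delta>0$, so that $u$ is Lipschitz there.
\end{itemize}
Alternatively, one can observe that the proof of Theorem \ref{PolyaszegoCylinder} only uses the level-set structure slice by slice. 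Since $W$ is a monotone homeomorphism, nice functions $v$ (finitely many solutions of $v(x',\cdot)=c$) correspond to functions $u$ with the same property, and the pointwise inequality (\ref{dirichlet3}) transfers directly. That route works with the representation $y_k\mapsto W(y_k)$ and Lemma \ref{basiclemma} applied to $f$. I would first try this direct route, proving (\ref{dirichletneu}) for nice $v$ and then passing to $L^N_1$ by the density of Remark \ref{rem2} together with the continuity of $G$. The change-of-variable computation above then serves as the bookkeeping that identifies the weighted integrands with those in (\ref{dirichlet2}).
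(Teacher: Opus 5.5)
Your change of variables is the paper's proof. The paper's entire argument is to apply Theorem~\ref{PolyaszegoCylinder} to $u(x',z)=v(x',W^{-1}(z))$ with $f=w\circ W^{-1}$, and your identities $v_y=f(z)\,u_z$ and $dz=w(y)\,dy$ are the bookkeeping it leaves implicit.

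Your admissibility concern is real, and the paper does not address it. Take the power weights $w(y)=y^{\beta}$, $\beta>0$. Then $f(z)=\bigl((\beta+1)z\bigr)^{\beta/(\beta+1)}$, which is the model case (\ref{modelcase}). Here $u_z=v_y/f(z)$ is unbounded near $z=0$ unless $v_y(x',0)=0$. So in general $u\notin L^N$, and Theorem~\ref{PolyaszegoCylinder} cannot be quoted directly.

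Your direct route repairs this. On each $\Omega_j$ the level-set functions of $u$ are $z_k=W(y_k)$, with $|\partial z_k/\partial c|=w(y_k)\,|\partial y_k/\partial c|$, $\nabla' z_k=w(y_k)\,\nabla' y_k$ and $f(z_k)=w(y_k)$. Hence the pointwise inequality (\ref{dirichlet3}) is literally the same algebraic inequality. It needs only Jensen's inequality and Lemma~\ref{basiclemma} applied to $f$ at the ordered points $z_k$. Piecewise affineness of $v$ is used only to obtain the finite decomposition into the $\Omega_j$, and that decomposition is identical for $v$ and $u$. So the route you say you would try first is sound, and it is more complete than the paper's one-line argument.

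There are two details to fix.

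First, your second bullet is not an argument as stated. Restricting $v$ to a subinterval of $I_1$ changes $T(v)$, which is anchored at $y=0$. Drop it in favour of the direct route.

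Second, in the density step, continuity of $G$ handles only the left-hand side. For the right-hand side you have no pointwise control of $\nabla T(v_m)$ (the paper is equally terse on this in Theorem~\ref{PolyaszegoCylinder}). Instead, use the level-set formulas themselves. For nice $v$, let $Y=W^{-1}\bigl(\sum_k z_k(-1)^{n-k}\bigr)$ be the level point of $T(v)(x',\cdot)$. Then
\begin{equation*}
|T(v)_y|=\frac{w(Y)}{\sum_k w(y_k)\,|\partial y_k/\partial c|}\le \|v_y\|_\infty ,
\end{equation*}
because $w(Y)=f\bigl(\sum_k z_k(-1)^{n-k}\bigr)\le\sum_k w(y_k)$ by Lemma~\ref{basiclemma}. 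Moreover, $\nabla' T(v)$ is a convex combination of the values $\nabla' v(x',y_k)$, with weights proportional to $w(y_k)\,|\partial y_k/\partial c|$. Hence the $T(v_m)$ are uniformly Lipschitz. Since also $T(v_m)\to T(v)$ uniformly, the gradients converge weakly-$*$. Lower semicontinuity of the convex integrand $(z',t)\mapsto G(z',|t|,v,x')$ then gives (\ref{dirichletneu}) for every $v\in L^N_1$.
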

\begin{proof} Apply Theorem \ref{PolyaszegoCylinder} to $u(x', z):= v\left( x', W^{-1} (z)\right)$ and  $f(z):= w(W^{-1} (z))$,  with $(x',z) \in \Omega  $. 
$f$ satisfies the properties (\ref{basiccond}) and (\ref{basiccond2}) and 
we have $u^* (x', z)= T(v) \left( x', W^{-1} (z)\right)$. Hence (\ref{dirichletneu}) follows from (\ref{dirichlet1}).
\end{proof}
\begin{definition}
Let $p\in [1, +\infty )$. 
The space of measurable functions $v: \Omega _1 \to \R $ such that
\begin{equation}
\label{Lp} 
\Vert v\Vert_{p,w} := \left( \int_{\Omega _1 } |v|^p \cdot w \, dx \right) ^{1/p} <+\infty 
\end{equation}
is denoted by $L^p (\Omega _1 ,w)$. 
\\
The weighted Sobolev space $W^{1,p} (\Omega_1 ,w ) $ is the space of functions $v\in L^p (\Omega _1 ,w )$ with weak derivatives $ v_{x_i }$, $i=1, \ldots , N-1$, $v_y $ belonging to $L^p (\Omega _1 ,w)$. A norm in $W^{1,p} (\Omega _1  , w)$ is given by
\begin{equation}
\label{W1pw}
|\Vert v\Vert|_{p,w} := \Vert \nabla v\Vert _{p,w} + \Vert v\Vert_{p,w} .
\end{equation}
\end{definition}
Theorem \ref{th2} can be extended to Sobolev functions. 
\begin{theorem}
\label{thnew1} 
Let $G$ be as in Theorem \ref{Polyaszego2} and let
$v\in W^{1,p} (\Omega _1 ,w)$ be nonnegative. 
\\  
Then $T(v)\in W^{1,p}( \Omega _1 ,w)$ and 
inequality (\ref{dirichletneu})  holds. 
In particular we have
\begin{equation}
\label{normineq}
\Vert \nabla v \Vert _{p,w} \geq \Vert \nabla T(v) \Vert _{p,w} \,.
\end{equation} 
\end{theorem}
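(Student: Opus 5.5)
The plan is to reduce Theorem \ref{thnew1} to Theorem \ref{Polyaszego2} through the change of variables $z=W(y)$ used in Theorem \ref{th2}. Given a nonnegative $v\in W^{1,p}(\Omega_1,w)$, set $u(x',z):=v(x',W^{-1}(z))$ on $\Omega$. Since $dz=w(y)\,dy$, we have the pointwise relations
$$
\nabla' u(x',z)=\nabla' v(x',y), \qquad u_z(x',z)=\frac{v_y(x',y)}{w(y)}, \qquad y=W^{-1}(z).
$$
With $f(z)=w(W^{-1}(z))$ this gives $f|u_z|=|v_y|$. Consequently, for every admissible $G$,
$$
\int_{\Omega_1} G(\nabla' v,|v_y|,v,x')\,w\,dx=\int_\Omega G(\nabla' u,f|u_z|,u,x')\,dx .
$$
The map $v\mapsto u$ is also an isometry from $L^p(\Omega_1,w)$ onto $L^p(\Omega)$. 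By (\ref{slice}), $T(v)$ corresponds exactly to $u^*$.

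The difficulty is that $u$ need not lie in the unweighted space $W^{1,p}(\Omega)$. Only $\nabla' u$ and $f u_z$ lie in $L^p(\Omega)$, and $u_z$ itself may fail to, for instance where $f$ is small or $w$ degenerates at the endpoints. So Theorem \ref{Polyaszego2} cannot be quoted verbatim. I would instead repeat its approximation argument in the weighted setting, using Theorem \ref{th2} for Lipschitz functions.

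\textbf{Step 1 (approximation).} Find nonnegative $v_k\in L^N_1$ with $v_k\to v$ in $W^{1,p}(\Omega_1,w)$. I would truncate in $y$ with cutoffs, truncate $v$ at large heights, and mollify. Since $w$ is continuous and positive on compact subsets of $I_1$, this works away from $y=0$ and $y=\alpha_1$. Near the ends one must make sure the cutoff errors vanish in the weighted norm. I expect this to be the main technical obstacle, since it is where the weighted Sobolev space may fail to admit Lipschitz approximants if $w$ is badly behaved. I would first try a cutoff $\eta_R(y)$ with $|\eta_R'|\le 1/R$ for the unbounded end, and rely on continuity of $w$ up to $\overline{I_1}$ for the finite end.

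\textbf{Step 2 (limit on the left).} Theorem \ref{th2} gives (\ref{dirichletneu}) for each $v_k$. Pass to a subsequence converging a.e. together with the gradients. The growth bound (\ref{sobolev}) and generalized dominated convergence then let the left-hand side converge to that of $v$.

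\textbf{Step 3 (limit on the right).} Transferred to $\Omega$, rearrangement in $z$ is a contraction in $L^p$: $\|u_k^*-u^*\|_{L^p}\le\|u_k-u\|_{L^p}$, applied slicewise. Hence $T(v_k)\to T(v)$ in $L^p(\Omega_1,w)$. Applying (\ref{dirichletneu}) with $G=|z|^p$ shows that $\nabla T(v_k)$ is bounded in $L^p(\Omega_1,w)$. For $p>1$, a weakly convergent subsequence identifies the weak gradient of $T(v)$, which gives $T(v)\in W^{1,p}(\Omega_1,w)$.

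\textbf{Step 4 (lower semicontinuity).} Weak lower semicontinuity holds for the convex-in-gradient functional $\int G(\cdot)\,w$, with continuity in $v$ handled by the a.e. convergence. This yields (\ref{dirichletneu}) for $v$. For $p=1$, follow \cite{Br} as in Theorem \ref{Polyaszego2}(ii), using BV-type lower semicontinuity.

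\textbf{Step 5 (norm inequality).} Finally, (\ref{normineq}) is the special case $G=|z|^p$, combined with the equimeasurability from (\ref{slice}), which gives $\|T(v)\|_{p,w}=\|v\|_{p,w}$.
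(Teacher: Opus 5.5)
You are taking the route the paper intends. The paper gives no separate proof of Theorem~\ref{thnew1}; it only says that Theorem~\ref{th2} ``can be extended to Sobolev functions''. The evident argument is yours: transfer by $z=W(y)$, then rerun the approximation and lower-semicontinuity scheme of Theorem~\ref{Polyaszego2}, with Theorem~\ref{th2} as the Lipschitz case.

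You are right that Theorem~\ref{Polyaszego2} cannot simply be quoted. Indeed $\int_\Omega|u_z|^p\,dx'\,dz=\int_{\Omega_1}|v_y|^p w^{1-p}\,dx$, and $f$ is unbounded whenever $w$ is. Steps 2--5 are fine, apart from the last remark below. The weak limit in $L^p(\Omega_1,w)$ is identified by testing against $\varphi/w$ with $\varphi\in C_0^\infty(\Omega_1)$, since $w$ is locally bounded below. Lower semicontinuity applies because $\xi\mapsto G(\xi',|\xi_N|,s,x')$ is convex, $G$ being convex and nondecreasing in $z_N$.

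The genuine gap is Step 1, which you leave as an expectation, and your plan for the finite endpoint does not work as stated. At $y=0$, continuity of $w$ only bounds $w$ from above. If $w(0)=0$, a bounded nonnegative element of $W^{1,p}(\Omega_1,w)$ need not lie in $W^{1,1}$ near $\{y=0\}$. For example, take $w=y^\gamma$ with $\gamma>p-1$ and $v=2+\sin(y^{-a})$ near $y=0$, where $0<a<(\gamma+1)/p-1$. So unweighted extension and mollification are not available there.

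The missing idea is to freeze $v$ in $y$ near the endpoint. Assume $0\le v\le M$, which your truncation provides, and set $v_\epsilon(x',y):=v(x',\max\{y,\epsilon\})$. For a.e.\ $\epsilon$,
\[
|\Vert v_\epsilon-v\Vert|_{p,w}^p\le C\Big(M^p|\Omega'|\,W(\epsilon)+W(\epsilon)\,g(\epsilon)+\int_{\Omega'\times(0,\epsilon)}|\nabla v|^p w\,dx\Big),\qquad g(\epsilon):=\int_{\Omega'}|\nabla'v(x',\epsilon)|^p\,dx'.
\]
Here $\int_0^\delta g\,w\,dy<\infty$, while $\int_0^\delta (w/W)\,dy=+\infty$ because $W(0)=0$. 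Hence $\liminf_{\epsilon\to0}W(\epsilon)g(\epsilon)=0$, and $v_{\epsilon_j}\to v$ along a suitable sequence $\epsilon_j\to 0$.

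Each $v_{\epsilon_j}$ lies in the unweighted space $W^{1,p}(\Omega'\times(0,\delta))$. It is independent of $y$ for $y<\epsilon_j$, and $w$ is bounded below on $[\epsilon_j,\delta]$. Since $w$ is bounded above on $[0,\delta]$, standard Lipschitz approximation there converges in the weighted norm. Treat $y=\alpha_1<\infty$ the same way, with $W(\alpha_1)-W$ in place of $W$, and use your $\eta_R$ at an infinite end.

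A minor point, shared with the paper's proof of Theorem~\ref{Polyaszego2}: if $\alpha=+\infty$ then $\int_{\Omega_1}w\,dx=+\infty$. The constant term in \eqref{sobolev} is then not integrable, so the dominated convergence in your Step 2 needs $\alpha<\infty$ or a growth bound without the constant.
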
 
\hspace*{0.3cm}Next we define a weighted BV-space.
\begin{definition}
For any function $v\in L^1 (\Omega _1 ,w  ) $, set
\begin{equation}
\label{BVseminorm}
\Vert \nabla v \Vert _{BV,w} := \sup \left\{ \int_{\Omega _1  } v \cdot \mbox{div}\, \varphi \, dx : \ \varphi \in C_0 ^1 (\Omega _1 , \R ^N), \ |\varphi|\leq w \right\} .
\end{equation} 
The weighted BV-space is the set of all functions $v\in L^1 (\Omega _1 ,w )$ for which the norm
\begin{equation}
\label{BVnorm}
|\Vert v\Vert|_{BV,w} := \Vert v\Vert _{1,w} + 
\Vert \nabla v \Vert _{BV,w}
\end{equation}
is finite.
\end{definition}
\begin{theorem}
\label{BV}
Let $v\in BV (\Omega _1 , w)$ be nonnegative. Then $T(v) \in BV (\Omega _1  , w)$ and 
\begin{equation}
\label{BVineq}
\Vert \nabla v\Vert_{BV,w} \geq \Vert \nabla T(v) \Vert _{BV,w} .
\end{equation}
\end{theorem}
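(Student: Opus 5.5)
The plan is to prove Theorem \ref{BV} by approximation from the Lipschitz case, using the case $p=1$, $G(z',z_N,v,x')=|z|$ of Theorem \ref{th2}. First, I will record that for $v\in L^N_1$ the weighted total variation is an integral:
\[
\Vert \nabla v\Vert_{BV,w}=\int_{\Omega_1}|\nabla v|\,w\,dx .
\]
The inequality $\le$ comes from integrating by parts against $\varphi$ with $|\varphi|\le w$. The inequality $\ge$ follows by taking $\varphi\approx w\,\nabla v/|\nabla v|$, suitably mollified and cut off; this uses $w>0$ and $w$ continuous. The function $G=|z|$ is continuous, convex in $z$ and nondecreasing in $z_N\ge 0$, so Theorem \ref{th2} gives
\[
\Vert\nabla v\Vert_{BV,w}\ge\Vert\nabla T(v)\Vert_{BV,w}\qquad\text{for every nonnegative } v\in L^N_1 .
\]

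Second, I will approximate a general nonnegative $v\in BV(\Omega_1,w)$ by nonnegative functions $v_k\in L^N_1$ with
\[
v_k\to v \ \text{in } L^1(\Omega_1,w),\qquad \Vert\nabla v_k\Vert_{BV,w}\to\Vert\nabla v\Vert_{BV,w}.
\]
This is the weighted analogue of the Anzellotti--Giaquinta strict approximation. I will use a partition of unity subordinate to a locally finite cover of $\Omega_1$ together with mollification at scales small enough that $w$ is nearly constant on each piece; continuity and positivity of $w$ inside $\Omega_1$ make the error in the weighted variation small. Afterwards I will truncate in $y$ (cut-off at large $y$ if $\alpha_1=\infty$) and in height, so that the support becomes compact in $\overline{\Omega_1}$ and the functions become Lipschitz. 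I expect this step to be the main obstacle, because of the boundary behaviour of $w$ at $y=0$ and $y=\alpha_1$, where $w$ may vanish, and because the support must be compact in $\overline{\Omega_1}$. What I would try first is the change of variables $z=W(y)$, which reduces the question to an unweighted-in-measure problem on $\Omega$. There it is the statement that the functional $\int|(\nabla'u,f u_z)|\,dx$ admits a strict approximation, which can be handled as in \cite{Br}, Theorem 1 (already invoked in the proof of Theorem \ref{Polyaszego2}(ii)).

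Third, I will pass to the limit. The rearrangement is nonexpansive in weighted $L^1$, sectionwise. Indeed, by (\ref{slice}) $T$ is, on each section, the decreasing rearrangement with respect to the measure $w\,dy$, so
\[
\Vert T(v_k)-T(v)\Vert_{1,w}\le\Vert v_k-v\Vert_{1,w},
\]
and hence $T(v_k)\to T(v)$ in $L^1(\Omega_1,w)$; equimeasurability also gives $T(v)\in L^1(\Omega_1,w)$. The functional $\Vert\nabla\cdot\Vert_{BV,w}$ is a supremum of functionals that are continuous in $L^1_{loc}$, so it is lower semicontinuous. Combining this with the first step,
\[
\Vert\nabla T(v)\Vert_{BV,w}\le\liminf_k\Vert\nabla T(v_k)\Vert_{BV,w}\le\lim_k\Vert\nabla v_k\Vert_{BV,w}=\Vert\nabla v\Vert_{BV,w}.
\]
This proves both $T(v)\in BV(\Omega_1,w)$ and (\ref{BVineq}).
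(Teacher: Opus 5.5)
Your architecture matches the paper's: strict approximation, the Dirichlet inequality for the approximants, $T(v_k)\to T(v)$ in $L^1(\Omega_1,w)$, and lower semicontinuity. Your Step 3 is cleaner than the paper's version. The sectionwise $L^1$-contraction justifies $T(v_k)\to T(v)$, which the paper only asserts. Lower semicontinuity of a supremum of $L^1_{loc}$-continuous functionals makes the compactness argument via \cite{Camfield}, Prop.~1.3.1, unnecessary.

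The gap is Step 2. You apply Theorem~\ref{th2}, so your approximants must lie in $L^N_1$, i.e.\ be Lipschitz on $\overline{\Omega_1}$. The construction you describe does not produce such functions. A partition of unity over a locally finite cover of $\Omega_1$, with mollification radii shrinking to $0$ at $\partial\Omega_1$, gives functions in $C^\infty(\Omega_1)$. Their gradients in general blow up at $\partial\Omega_1$: at $y=0$, at $y=\alpha_1$, and on $\partial\Omega'\times I_1$. Truncating in height or cutting off at large $y$ does not change this. Nor can you simply cut off near $y=0$. That adds roughly $\epsilon^{-1}\int_{\{\epsilon<y<2\epsilon\}}v\,w\,dx$ to the variation, and when $w(0)>0$ this tends to $w(0)$ times the integral of the trace of $v$ over $\Omega'\times\{0\}$, not to $0$. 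The pointer to \cite{Br} after the substitution $z=W(y)$ does not settle this either. There the coefficient $f$ may vanish at $z=0$, and be unbounded when $\alpha=+\infty$. Since $\int|u_z|\,dz=\int|v_y|\,dy$ is unweighted, $u$ need not even belong to $W^{1,1}(\Omega)$.

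The simplest repair is what the paper does. Ask only for nonnegative $v_k\in W^{1,1}(\Omega_1,w)$ with $v_k\to v$ in $L^1(\Omega_1,w)$ and $\Vert\nabla v_k\Vert_{1,w}\to\Vert\nabla v\Vert_{BV,w}$. This is an interior, Anzellotti--Giaquinta-type approximation, \cite{Camfield}, Theorem 3.2.3. Then replace Theorem~\ref{th2} by Theorem~\ref{thnew1} with $p=1$, $G=|z|$. Your Step 1 identity $\Vert\nabla v\Vert_{BV,w}=\int_{\Omega_1}|\nabla v|\,w\,dx$ holds verbatim in $W^{1,1}(\Omega_1,w)$. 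The passage from Lipschitz functions to functions that are rough up to the boundary is then absorbed into Theorem~\ref{thnew1}.

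If you prefer to keep Theorem~\ref{th2}, you must prove the boundary approximation yourself, for instance as follows.
\begin{enumerate}
\item Truncate at height $M$; this does not increase the weighted variation.
\item At an endpoint of $I_1$ where $w$ vanishes, a cut-off in $y$ with slope at most $2/\epsilon$ costs at most $2M|\Omega'|\sup_{[\epsilon,2\epsilon]}w\to0$.
\item At boundary points where $w>0$, including the lateral boundary, extend $v$ across the Lipschitz boundary so that the variation of the extension does not charge $\partial\Omega_1$. Then mollify, and use the continuity of $w$ to obtain convergence of the weighted variations.
\end{enumerate}
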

\begin{proof}
Let $v\in BV ( \Omega _1 ,w)$ be nonnegative and let $u(x',z):=v(x', W^{-1}(z))$, ($(x',z) \in \Omega $). 
We can choose a sequence of nonnegative functions $\{ v_k \} \subset W^{1,1}  ( \Omega _1 ,w )$,
such that $v_k \to v $ in $L^1 (\Omega _1 ,w )$ and such that
$\lim_{k\to \infty } \Vert \nabla v_k \Vert _{1,w} = \Vert \nabla v \Vert _{BV,w} $ (see \cite{Camfield}, Theorem 3.2.3 and the introductory remarks to Section 4.1 on p. 80).
Then 
\begin{equation}
\label{L1convergence} 
T(v_k)  \to T(v) \ \mbox{ in }\ L^1 (\Omega _1 ,w  ),
\end{equation} 
and since
\begin{equation}
\label{normineq2}
\Vert \nabla v_k  \Vert _{1,w} \geq \Vert \nabla T(v_k)
\Vert _{1,w} ,\ \ k=1,2, \ldots ,
\end{equation}
by Theorem \ref{thnew1},
the functions $T(v_k ) $ are equibounded in $W^{1,1} (\Omega _1  ,w)$. Since our weight is locally bounded away from zero, Proposition 1.3.1 of \cite{Camfield} tells us that there is a subsequence $\{ k_n \} $ and a vector-valued Radon measure ${\bf m} $, such that 
$$
\nabla T(v_{k_n }) \rightharpoonup {\bf m }  \ \mbox{ weakly in }\   L^1 (\Omega _1  ,w).
$$
In view of (\ref{L1convergence}) this implies that $T(v)\in BV (\Omega _1  , w)$, ${\bf m}= \nabla T(v)$ and moreover,
\begin{equation}
\label{wlscSv}
\Vert \nabla T(v)\Vert_{BV,w} \leq \liminf_{n\to \infty} \Vert \nabla T(v_{k_n }) \Vert _{1,w} .
\end{equation}  
Now the assertion follows from (\ref{normineq2}) and (\ref{wlscSv}). 
\end{proof}
\begin{definition}
Let $M\subset \Omega _1  $ be measurable. Then 
\begin{equation}
\label{wperimeter}
P_w (M, \Omega _1 ) := \Vert \nabla \chi (M)  \Vert _{BV,w} 
\equiv  \sup 
\left\{ \int_{M } \mbox{div}\, {\bf \varphi} \, dx : \ {\bf \varphi} \in C_0 ^1 (\Omega _1 , \R ^N), \ |{\bf \varphi}|\leq w \right\} 
\end{equation}
is called the {\sl $w$-perimeter of $M$}.
\end{definition}
It is well-known that for open bounded sets $M\subset \Omega _1 $ with Lipschitz boundary,  (\ref{wperimeter}) is equivalent to 
\begin{equation}
\label{wperimeter2}
P_w (M, \Omega _1  ) = \int_{\partial M \cap \Omega } w \, d\mathcal{H} _{N-1} .
\end{equation}
From Theorem \ref{BV} applied to the function $\chi(M)$, we have the following isoperimetric inequality for sets of finite $w$-perimeter.
\begin{corollary}\label{cor8}
(Isoperimetric inequality)
Let $M\subset \Omega _1  $ be a set of finite $w$-perimeter. Then
\begin{equation}
P_{w } (M , \Omega _1)  \geq
P_{w} (T(M), \Omega _1  ).
\label{isop2}
\end{equation}
\end{corollary}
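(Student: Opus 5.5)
The plan is to apply Theorem \ref{BV} to the nonnegative function $v=\chi(M)$ and then identify the rearranged function with a characteristic function. First I check that $\chi(M)$ lies in $BV(\Omega_1,w)$. By hypothesis $P_w(M,\Omega_1)=\Vert \nabla \chi(M)\Vert_{BV,w}<+\infty$, so only $\chi(M)\in L^1(\Omega_1,w)$ remains. If $\int_M w\,dx<+\infty$ this holds directly.

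If $\int_M w\,dx=+\infty$, I would instead replace $M$ by its complement. This is natural when $\alpha<+\infty$, where a set of infinite $w$-measure cannot occur in a bounded $\Omega$ after the change of variables. Otherwise I would argue by truncation: work with $\Omega'\times(0,R)$ and let $R\to\infty$. In fact $T$ is only defined on $S^N_1$, which already requires the sections $\{v(x',\cdot)>c\}$ to have finite $w$-measure. So for the statement to make sense I may implicitly assume $\chi(M)\in S^N_1$, which gives $\chi(M)\in L^1_{loc}$. I would note this as a standing convention and handle it by a local version of Theorem \ref{BV} if needed.

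Next I apply Theorem \ref{BV} to get $T(\chi(M))\in BV(\Omega_1,w)$ and
$$\Vert\nabla\chi(M)\Vert_{BV,w}\ge \Vert\nabla T(\chi(M))\Vert_{BV,w}.$$
It remains to check that $T(\chi(M))=\chi(T(M))$, which is exactly definition (\ref{chiM}). To see that this is consistent, I verify that $T$ maps characteristic functions to characteristic functions. The function $u(x',z)=\chi(M)(x',W^{-1}(z))$ is the characteristic function of the image set. Its decreasing rearrangement in $z$ is, slicewise, the characteristic function of $(0,\ell(x'))$, where $\ell(x')$ is the length of the slice. Composing with $W$ again gives a characteristic function. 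So the right-hand side equals $P_w(T(M),\Omega_1)$ by (\ref{wperimeter}), and the left-hand side equals $P_w(M,\Omega_1)$, which yields (\ref{isop2}).

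The main obstacle is the integrability issue in the first step, namely ensuring that $\chi(M)$ genuinely belongs to $BV(\Omega_1,w)$ so that Theorem \ref{BV} applies. The rearrangement inequality itself contributes nothing new beyond that theorem. I would handle this first by assuming $|M|_w<\infty$. For the general case I would approximate $M$ by $M\cap(\Omega'\times(0,R))$, but this changes the perimeter by at most $\int_{\Omega'}w(R)\,dx'$ along the cut. The cleaner route is to restrict the test fields in (\ref{BVseminorm}) to compact supports, which is already local in nature.
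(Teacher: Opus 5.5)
Your argument is exactly the paper's: apply Theorem~\ref{BV} to $v=\chi(M)$ and use (\ref{chiM}) to identify $T(\chi(M))$ with $\chi(T(M))$, so that both sides of (\ref{BVineq}) become $w$-perimeters. The integrability digression can be dropped. For $\alpha<+\infty$ one has $\int_{\Omega_1}w\,dx=\alpha\,|\Omega'|<+\infty$, so $\chi(M)\in L^1(\Omega_1,w)$ automatically; for $\alpha=+\infty$ the paper's one-line proof tacitly requires $\chi(M)\in BV(\Omega_1,w)$, just as your fallback assumption $\int_M w\,dx<+\infty$ does, so the complement and truncation detours are unnecessary.
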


\section*{Acknowledgement}

\noindent 
This work was done during the visits made by the first author to
 Dipartimento di Matematica e
Applicazioni ``R. Caccioppoli'' dell' Universit\`a degli Studi di
Napoli Federico II. The author gratefully thanks this institution for its warm hospitality.
\noindent The research of the second author was partially supported by Italian MIUR through research projects PRIN PNRR 2022 - P2022YFAJH - Linear and Nonlinear PDE's: New directions and Applications. The research of the fourth author was partially supported by Italian MIUR through research projects PRIN 2022: PRIN20229M52AS Partial differential equations and related geometric-functional inequalities and PRIN PNRR 2022 - P2022YFAJH - Linear and Nonlinear PDE's: New directions and Applications. The second, third and fourth authors are members of the Gruppo Nazionale per l'Analisi Matematica, la Probabilit\`a
e le loro Applicazioni (GNAMPA) of the Istituto Nazionale di Alta Matematica (INdAM).

\end{document}